\newtheorem{theorem}{{\bf Theorem}}[section]
\theoremstyle{definition} 
\theoremstyle{plain} \newtheorem{lemma}[theorem]{Lemma}
\title{\bf A Two Dimensional Backward Heat Problem With Statistical Discrete Data}
\author{ Nguyen Dang Minh$^1$\footnote{Corresponding author: \url{ndminhsv1986@gmail.com}}, To Duc Khanh$^2$, Nguyen Huy Tuan$^1$ and Dang Duc Trong$^1$ \\\\
\small $^1$ Fact. Maths and Computer Science, University of Science,\\
\small Vietnam National University, 227 Nguyen Van Cu, Dist.5, HoChiMinh City, VietNam.\\
\small $^2$ Dept. Statistical Sciences, University of Padua, 
\\
\small via C. Battisti, 241; 35121 Padua, Italy.
}
\begin{document}
\date{}
\maketitle
\begin{abstract}
In this paper, we focus on the backward heat problem of finding the function $\theta(x,y)=u(x,y,0)$ such that
\begin{equation*}
%\label{prob1}
\left\{\begin{array}{l l l}
u_t - a(t)(u_{xx} + u_{yy}) & = f(x,y,t), & \qquad (x,y,t) \in \Omega\times (0,T),\\
u(x,y,T) & = h(x,y), & \qquad (x,y) \in\overline{\Omega},
\end{array}
\right.
\end{equation*}
where $\Omega = (0,\pi) \times (0,\pi)$ and the heat transfer coefficient $a(t)$ is known. In our problem, the source $f = f(x,y,t)$ and the final data $h(x,y)$ are unknown. We only know random noise data $g_{ij}(t)$ and $d_{ij}$ satisfying the regression models
\begin{eqnarray}
g_{ij}(t) &=& f(x_i,y_j,t) + \vartheta\xi_{ij}(t),\nonumber\\
d_{ij} &=& h(x_i,y_j) + \sigma_{ij}\epsilon_{ij},\nonumber
\end{eqnarray}
where $\xi_{ij}(t)$ are Brownian motions, $\epsilon_{ij}\sim \mathcal{N}(0,1)$, $(x_i,y_j)$ are grid points of $\Omega$ and $\sigma_{ij}, \vartheta$ are unknown positive constants. The noises $\xi_{ij}(t), \epsilon_{ij}$ are mutually independent. From the known data $g_{ij}(t)$ and $d_{ij}$, we can recovery the initial temperature $\theta(x,y)$. However, the result thus obtained is not stable and the problem is severely ill--posed. To regularize the instable solution, we use the trigonometric method in nonparametric regression associated with the truncated expansion method. In addition, convergence rate is also investigated numerically.
 
\textit{Keywords and phrases}: Backward heat problems, Non--homogeneous heat equation, Ill--posed problems, Nonparametric regression, Statistical inverse problems.

\textit{MSC 2010 Subject Classification}: 35K05, 47A52, 62G08.  
\end{abstract}
\section{Introduction}
In the literature of PDE research papers, the heat problem is pretty much studied, since it is important in various physics and industrial applications. The heat problem has many forms, in which we have two ordinary forms. The first one is of determining the future temperature of the system from the initial data. The second one is of finding the initial temperature from the final temperature. They are commonly known as ``forward heat problems'' and ``backward heat problems'', respectively. The backward heat problems are applied in fields as the heat conduction theory \cite{BBC}, material science \cite{RHN}, hydrology \cite{JB,LEP}, groundwater contamination \cite{SK}, digital remove blurred noiseless image \cite{CSH} and also in many other practical applications of mathematical physics and engineering sciences. As known, the backward heat problem is severely ill--posed (see \cite{r5} or Section 3).\\

Now, to consider the problem we state the precise form of our problem. Let $\Omega=(0,\pi)\times(0,\pi), T>0$ and $a:(0,T)\rightarrow\mathbb{R}$ be a positive Lebesgue measurable function. In this paper we focus on the two dimensional nonhomogeneous backward heat problems of finding functions $\theta(x,y):=u(x,y,0), f(x,y,t)$ and $h(x,y)$ such that
\begin{equation}
\label{prob2s}
\left\{\begin{array}{l l l}
u_t - a(t)(u_{xx} + u_{yy}) & = f(x,y,t), & \qquad (x,y,t) \in \Omega\times (0,T),\\
u(x,y,T) & = h(x,y), & \qquad (x,y) \in\overline{\Omega},
\end{array}
\right.
\end{equation}
subject to the Dirichlet condition
\begin{equation}
u(0,y,t) = u(\pi,y,t) = u(x,0,t) = u(x,\pi,t) = 0.
\label{cd1}
\end{equation}
We shall assume that $0 < a_1 \le a(t) \le a_2$ where $a_1, a_2$ are positive constants.\\

As known, measurements always are given at a discrete set of points
and contain errors. These errors may be generated from controllable sources or uncontrollable sources. In the first case, the error is often deterministic.  Hence, if we know approximation $f_\epsilon, h_\epsilon$ of the source $f$ and the final data $h$ then we can construct an approximation for $\theta(x,y)$. If the errors are generated from uncontrollable sources as wind, rain, humidity, etc, then the model is random. Methods for the deterministic cases cannot apply directly for the case.  Because of the random noise, the calculation is often intractable. In fact, let $(x_i,y_j) = \left(\frac{\pi (2i-1)}{2n},\frac{\pi (2j-1)}{2m}\right)$ with $i=\overline{1,n}; j=\overline{1,m}$, be grid points in $\Omega$. We consider two nonparametric regression models of data
\begin{eqnarray}
g_{ij}(t) &=& f(x_i,y_j,t) + \vartheta\xi_{ij}(t),\label{st_md}\\
d_{ij} &=& h(x_i,y_j) + \sigma_{ij}\epsilon_{ij}, \qquad i=\overline{1,n}; j=\overline{1,m} \label{md2}
\end{eqnarray}
where $g_{ij}(t)$ are random process, $d_{ij}$ are random data, $\xi_{ij}(t)$ are Brownian motions, $\epsilon_{ij}\sim \mathcal{N}(0,1)$ and $\sigma_{ij}$ are bounded by a positive constant $V_{max}$, i.e., $0 \leq \sigma_{ij} < V_{\text{max}}$ for all $i,j$. The random variables $\xi_{ij}(t), \epsilon_{ij}$ are mutually independent. In the model, $g_{ij}(t)$ and $ d_{ij}$ are observable whereas  
$\vartheta\xi_{ij}(t)$ and $\sigma_{ij}\epsilon_{ij}$ are unknown.
Now, we have a problem of finding the final temperature $h(x,y)$, the initial temperature $\theta(x,y)=u(x,y,0)$ and the source $f(x,y,t)$ from the random noise data $g_{ij}(t)$ and $d_{ij}$. \\

Roughly speaking, we can find two ways to examine the general backward problem: the numerical tendency and the theoretical tendency. In the numerical tendency, the paper is focused essentially on the new numerical algorithms and gives a lot of examples to convince readers about the effectiveness of the algorithms. In the theoretical tendency, authors have to consider the convergence of  algorithms  and give some illustration examples. Both approaches are important in real applications. On the other hand, we can classify informally problems into the deterministic and the stochastic problems. So we have four styles of investigating the problem
\begin{enumerate}
  \item {\bf N}umerical {\bf T}endency for {\bf D}eterministic {\bf P}roblem (NTDP for short),
  \item {\bf N}umerical {\bf T}endency for {\bf S}tochastic {\bf P}roblem (NTSP),
  \item {\bf T}heoretical {\bf T}endency for {\bf D}eterministic {\bf P}roblem (TTDP),
  \item {\bf T}heoretical {\bf T}endency for {\bf S}tochastic {\bf P}roblem (NTSP).
\end{enumerate}

The literature of the NTDP, TTDP styles for the backward problem is traditional and  very huge. For example, we have many approaches as Tikhonov method (\cite{r14,r13}), Quasi--boundary value method (see \cite{r16,r18,r19}), Quasi--Reversibility method (\cite{r20,show}), mollification (\cite{r21}), truncated expansion (\cite{PTN,r15}), the general filter regularization method (\cite{QW}).... The literature of the NTSP styles is also large. A lot of papers are of NTDP style but its examples have data with concrete random noises (often bounded with uniform distribution).  In the paper \cite{r141},  the authors used the backward group preserving scheme
to deal with the problem. The noisy data $R(i)$ in the paper are random numbers in $[-1,1]$. In \cite{LJH}, the authors used the noisy data in interior collocation points and boundary collocation points generated by the Gaussian random number.  The paper \cite{r141,LJH} is of NTSP style with very new interested numerical methods. The NTSP results suggested us to write future papers  devoted to the theoretical error estimates of the schemes.

The paper related to the TTSP style of the backward problem is 
quite scarce. In our knowledge, we can list here some related papers.  In Cavalier L. \cite{r1}, author gave some theoretical examples about inverse problems with random noise. Mair B. and Ruymgaart F. H. \cite{r7} considered theoretical formulas  for statistical inverse estimation in Hilbert scales
and applied the method for some examples. 
Our paper is inspired from the paper by Bissantz. N. and Holzmann. H. \cite{r3}
in which the authors considered a one--dimensional homogeneous backward problem. The very last papers are dealt with
i.i.d. random noises. In the present paper, we consider the nonhomogenous backward problem, which is of TTSP style with general non--i.i.d. noises and random sources. In our opinion, it is a positive point of our paper.\\

In the present paper,  we use the trigonometric method in nonparametric regression associated with the truncated expansion method to construct estimators which recover stably the Fourier coefficients of the unknown function $\theta(x,y)$. This ``hybrid" approach can be seen as a generalization of the one in \cite{r3} to the multi--dimensional and nonhomogeneous problem. Moreover, in \cite{r3}, an estimate of discretization bias of one--dimensional Fourier coefficients, one of the main part of the method, is stated heuristically without proof. Meanwhile, we are looking for an estimate so that it can be applied to the Sobolev class of functions.  To fill this gap in the two--dimensional case, we have to find a representation of the discretization bias by high--frequency Fourier coefficients of $h(x,y), f(x,y,t)$.\\

The rest of the paper is divided into 4 parts. In Section 2, we introduce the discretization form of Fourier coefficients.
Section 3 is devoted to the ill--posedness of the problem. In Section 4, we construct  estimator $\hat{\theta}(x,y)$
 for the initial temperature. We also give an upper bound for the error of estimation. Finally, we present some numerical results in Section 5. 

\section{Discretization form of Fourier coefficients}
In this paper, we denote
\begin{eqnarray}
L^2(\Omega) &=& \Bigg\{g:\Omega \to \mathbb{R} : \text{ $g$ is Lebesgue measurable and }\int_{\Omega} g^2(x,y) \mathrm{d} x \mathrm{d} y < \infty \Bigg\},\nonumber
\end{eqnarray}
with the inner product
\[\langle g_1, g_2\rangle = \int_{\Omega} g_1(x,y) g_2(x,y)\mathrm{d} x \mathrm{d} y,\] 
and the norm 
\[\|g\| = \sqrt{\int_{\Omega} g^2(x,y) \mathrm{d} x \mathrm{d} y}.\]
Here, we recall that $\Omega=(0,\pi)\times(0,\pi)$. For $p,q=1,2,\dots$, we put $\phi_p(x)=\sqrt{\dfrac{2}{\pi}}\sin px$ and $\phi_{p,q}(x,y)=\phi_p(x)\phi_q(y)$. As known, the system $\{\phi_{p,q}\}$ is completely orthonormal. Therefore
\[u(x,y,t)=\sum_{p=1}^{\infty}\sum_{q=1}^{\infty}u_{p,q}(t)\phi_{p,q}(x,y),\] 
where $u_{p,q}(t)=\langle u(\cdot,\cdot,t),\phi_{p,q}\rangle$. Similarly, we put
\begin{eqnarray}
\begin{array}{r l r l}
\theta_{p,q} &= \langle\theta,\phi_{p,q}\rangle, & \qquad f_{p,q}(t) &= \langle f(\cdot,\cdot,t),\phi_{p,q}\rangle , \\[8pt]
\lambda_{p,q}(t) &= e^{-A(t)\left(p^2 + q^2\right)}, & \qquad A(t) &= \displaystyle{\int_0^t a(\tau)\mathrm{d}\tau}.
\end{array}
\nonumber
\end{eqnarray}
Substituting the expansion of the function $u(x,y,t)$ into \eqref{prob2s} we obtain
\[\dfrac{\partial}{\partial t}u_{p,q}(t)+a(t)(p^2+q^2)u_{p,q}(t)=f_{p,q}(t).\] 
Thus
\[\dfrac{\partial}{\partial t}\left(e^{(p^2+q^2)\int_{0}^{t}a(\tau)\text{d}\tau}u_{p,q}(t)\right)=e^{(p^2+q^2)\int_{0}^{t}a(\tau)\text{d}\tau}f_{p,q}(t).\]
Solving this differential equation gives
\[u_{p,q}(t)=\left(\theta_{p,q} + \int_0^t \lambda^{-1}_{p,q}(\tau)f_{p,q}(\tau)\mathrm{d}\tau\right)\lambda_{p,q}(t).\] 
Hence, 
\begin{equation}
\label{solution1}
u(x,y,t) = \sum_{p=1}^\infty\sum_{q=1}^\infty \left(\theta_{p,q} + \int_0^t \lambda^{-1}_{p,q}(\tau)f_{p,q}(\tau)\mathrm{d}\tau\right)\lambda_{p,q}(t)\phi_{p,q}(x,y).
\end{equation}
Noting that
\begin{equation*}\label{theta0}
\theta(x,y) = u(x,y,0) = \sum_{p=1}^\infty\sum_{q=1}^\infty \theta_{p,q}\phi_{p,q}(x,y),
\end{equation*}
we can obtain the expansion
\begin{eqnarray*}
h(x,y) = u(x,y,T) &=& \sum_{p=1}^\infty\sum_{q=1}^\infty \left(\theta_{p,q} + \int_0^T \lambda^{-1}_{p,q}(\tau)f_{p,q}(\tau)\mathrm{d}\tau\right)\lambda_{p,q}(T)\phi_{p,q}(x,y).
\end{eqnarray*}
It follows that
\begin{equation}
h_{p,q} = \left(\theta_{p,q} + \int_0^T \lambda^{-1}_{p,q}(\tau)f_{p,q}(\tau)\mathrm{d}\tau\right)\lambda_{p,q}(T).
\label{h_pq}
\end{equation}

To establish an estimator for $\theta$, we need to recovery the Fourier coefficients $\theta_{p,q}, h_{p,q}$ and  $f_{p,q}(t)$ from $g_{ij}(t),d_{ij}$. Hence, we use approximation formulae of the coefficients which are constructed from the data--set. 
Suggested by one--dimensional estimators in \cite{r7}, \cite{r3}, 
we can construct a two--dimensional formula which give a discretization expansion for the Fourier coefficient $h_{p,q}$.
In fact, we claim that 
$$ h_{p,q}\approx\dfrac{\pi^2}{nm}\sum_{i=1}^n\sum_{j=1}^mh(x_i,y_j)\phi_{p,q}(x_i,y_j). $$
As mentioned in \cite{r3}, the discretization bias 
\begin{equation}
\gamma_{n,m,p,q}:= \dfrac{\pi^2}{nm}\sum_{i=1}^n\sum_{j=1}^mh(x_i,y_j)\phi_{p,q}(x_i,y_j) - h_{p,q}
\label{h_p,q}
\end{equation}
 is difficult to handle. 
In \cite{r3}, for brevity, the authors only assumed that the one--dimensional bias is of order $O(n^{-1})$. In the present paper, we shall give an explicitly estimate for the two--dimensional bias.

In fact, the formulae for the discretization bias will be derived from 
\begin{lemma}\label{lm1} Put 
$$ \delta_{p,q,r,s}= \dfrac{1}{n}\sum_{i=1}^{n}\phi_{p}(x_i)\phi_{r}(x_i) \dfrac{1}{m}\sum_{j=1}^{m} \phi_{q}(y_j)\phi_{s}(y_j). $$
For $p=\overline{1,n-1}$ and $q = \overline{1,m-1}$, with $x_i=\dfrac{\pi (2i-1)}{2n},y_j = \dfrac{\pi (2j-1)}{2m}$, we have
\begin{equation*}\label{tc1}
\delta_{p,q,r,s} = \left\{\begin{array}{*{20}{l}}
\hfill \dfrac{1}{\pi^2}, & (r,s)\pm (p,q) = (2kn,2lm),\\[8pt]
\hfill -\dfrac{1}{\pi^2}, & (r,s)\pm (-p,q) = (2kn,2lm),\\ [8pt]
\hfill 0, & \text{otherwise.}
\end{array} \right.
\end{equation*}
If $r = \overline{1, n-1}$ and $s = \overline{1,m-1}$, we obtain
\begin{equation*}\label{tc0}
\delta_{p,q,r,s} = \left\{\begin{array}{*{20}{l}}
\hfill \dfrac{1}{\pi^2}, & \qquad r = p \text { and } s = q,\\[8pt]
\hfill 0, & \qquad r \ne p \text{ or } s \ne q.
\end{array}\right.
\end{equation*}
\end{lemma}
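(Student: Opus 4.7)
The natural plan is to exploit separation of variables, writing
$\delta_{p,q,r,s} = A_{p,r}(n)\,B_{q,s}(m)$ with
$A_{p,r}(n) := \frac{1}{n}\sum_{i=1}^{n}\phi_p(x_i)\phi_r(x_i)$ and $B_{q,s}(m)$ defined analogously in $y$, and to evaluate each one-dimensional factor via a closed-form Dirichlet-type trigonometric identity at midpoint nodes. Applying the product-to-sum identity $\sin\alpha\sin\beta = \tfrac{1}{2}[\cos(\alpha-\beta)-\cos(\alpha+\beta)]$ reduces each factor to
\[
A_{p,r}(n) = \frac{1}{\pi n}\bigl[S_n(p-r) - S_n(p+r)\bigr], \qquad S_n(k) := \sum_{i=1}^{n}\cos\!\Bigl(\tfrac{k\pi(2i-1)}{2n}\Bigr).
\]

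The next step is to pin down $S_n(k)$ for $k\in\mathbb{Z}$. Using the classical identity $\sum_{i=1}^{n}\cos((2i-1)\theta) = \sin(2n\theta)/(2\sin\theta)$ with $\theta = k\pi/(2n)$, the numerator $\sin(k\pi)$ vanishes for every integer $k$, whereas the denominator $\sin(k\pi/(2n))$ vanishes precisely when $k$ is a multiple of $2n$. A direct evaluation at such $k = 2n\ell$ gives $S_n(2n\ell) = n(-1)^\ell$. Consequently $A_{p,r}(n)$ equals $\pm 1/\pi$ (with sign dictated by which of $p\pm r$ resonates) exactly when one of $p-r$, $p+r$ is an integer multiple of $2n$, and vanishes otherwise. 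Taking products over the $x$- and $y$-factors yields the four-sign case analysis of the first assertion.

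For the second assertion I would argue by range. If $1 \le p, r \le n-1$ then $|p-r| \le n-2 < 2n$ and $2 \le p+r \le 2n-2 < 2n$, so neither $p-r$ nor $p+r$ can be a nonzero multiple of $2n$. Hence $A_{p,r}(n)$ vanishes unless $p - r = 0$, in which case $A_{p,r}(n) = 1/\pi$. The identical argument in the $y$-variable gives $B_{q,s}(m) = 1/\pi$ if $s = q$ and $0$ otherwise; multiplication yields the stated $1/\pi^2$ or $0$.

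I expect the most delicate step to be the sign bookkeeping in the first assertion: the four subcases corresponding to the sign choices in $r \pm p = 2kn$ and $s \pm q = 2lm$ each carry a factor $(-1)^{k+l}$ from $S_n(2kn)S_m(2lm)$, and one must check that they consolidate correctly into the stated $\pm 1/\pi^2$. The hypothesis $p \le n-1$, $q \le m-1$ is essential here: it prevents the simultaneous occurrence of $r \equiv p$ and $r \equiv -p \pmod{2n}$, ensuring that no two branches contribute to the same $(r,s)$ and that the case analysis in the statement is mutually exclusive.
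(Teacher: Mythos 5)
Your overall strategy is sound, and it is worth noting that it is genuinely more informative than what the paper does: the paper's entire proof of this lemma is a one-line citation to Lemma 3.5 of Eubank's book, so your self-contained argument (separation into one-dimensional factors, product-to-sum, the Dirichlet-type identity $\sum_{i=1}^n\cos((2i-1)\theta)=\sin(2n\theta)/(2\sin\theta)$, and the evaluation $S_n(2n\ell)=n(-1)^{\ell}$) supplies exactly the computation the paper omits. The second assertion (the case $r=\overline{1,n-1}$, $s=\overline{1,m-1}$) is handled correctly and completely by your range argument.

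The gap is precisely at the step you yourself flag as delicate and then do not carry out: the signs do \emph{not} consolidate into the stated $\pm 1/\pi^2$. From your own formulas, $A_{p,r}(n)=(-1)^k/\pi$ when $r-p=2kn$ and $A_{p,r}(n)=(-1)^{k+1}/\pi$ when $r+p=2kn$ (and analogously for $B_{q,s}(m)$), so the product is $(-1)^{k+l}/\pi^2$ in the ``same sign'' case and $(-1)^{k+l+1}/\pi^2$ in the ``opposite sign'' case. These agree with the lemma's stated constants only when $k+l$ is even. Concretely, take $p=q=1$, $r=4n+1$, $s=2m+1$, so $(r,s)-(p,q)=(2kn,2lm)$ with $k=2$, $l=1$: the lemma's first branch asserts $\delta=+1/\pi^2$, but $A_{1,4n+1}(n)=\frac{1}{\pi n}S_n(-4n)=+1/\pi$ while $B_{1,2m+1}(m)=\frac{1}{\pi m}S_m(-2m)=-1/\pi$, giving $\delta=-1/\pi^2$. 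So either you must prove the corrected statement carrying the factors $(-1)^{k+l}$ and $(-1)^{k+l+1}$ (which is what your computation actually yields, and which is consistent with the alternating signs $(-1)^k$, $(-1)^l$, $(-1)^{k+l}$ that the paper uses downstream in Lemmas \ref{lm_hpq} and \ref{lm_f_{pq}}), or you must accept that the assertion as written cannot be established by this (or any) argument. The claim that the branches ``consolidate correctly'' is therefore not a bookkeeping detail to be deferred but the point at which the proof, as proposed, fails.
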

\begin{proof}
The lemma is a direct consequence of Lemma 3.5 in \cite{r4}.
\end{proof}
From the latter lemma, we can represent the discretization bias $\gamma_{n,m,p,q}$ by high--frequency Fourier coefficients of the function $h$.  
Precisely, we have
\begin{lemma}\label{lm_hpq}
Assume that $h \in C^1(\overline{\Omega})$. For $p=\overline{1,n-1}$, $q=\overline{1,m-1}$. 
Then
\begin{equation}
\gamma_{n,m,p,q} = P_{n,p,q} + Q_{m,p,q} + R_{n,m,p,q}
\label{gamma0},
\end{equation}
with
\begin{eqnarray}
P_{n,p,q} &=& \sum_{k=1}^{\infty}(-1)^k h_{2kn \pm p,q},\qquad Q_{m,p,q} = \sum_{l=1}^{\infty}(-1)^l h_{p,2lm \pm q},\nonumber\\
R_{n,m,p,q} &=& \sum_{k=1}^{\infty}\sum_{l=1}^{\infty}(-1)^{k+1}\left(h_{2kn \pm p,2lm - q} + h_{2kn \pm p,2lm + q}\right).\nonumber
\end{eqnarray}
\end{lemma}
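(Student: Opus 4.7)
The strategy is to expand $h$ in its Fourier basis $\{\phi_{r,s}\}$, plug into the definition of $\gamma_{n,m,p,q}$, interchange the order of summation, and then apply Lemma \ref{lm1} to identify which Fourier indices $(r,s)$ contribute to the bias. Writing $h(x,y)=\sum_{r,s\geq 1}h_{r,s}\phi_{r,s}(x,y)$ and substituting gives
$$\frac{\pi^{2}}{nm}\sum_{i=1}^{n}\sum_{j=1}^{m}h(x_i,y_j)\phi_{p,q}(x_i,y_j) \;=\; \pi^{2}\sum_{r,s\geq 1}h_{r,s}\,\delta_{p,q,r,s}.$$
The interchange of summation is legitimate because the $C^{1}(\overline{\Omega})$ regularity (together with the sine-series boundary behaviour $h=0$ on $\partial\Omega$) lets one integrate by parts once in each variable and obtain enough decay on $h_{r,s}$ for absolute convergence of the resulting double series.

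By the first part of Lemma \ref{lm1}, the admissible indices $(r,s)$ with $\delta_{p,q,r,s}\neq 0$ are exactly those which agree with $(\pm p,\pm q)$ modulo $(2n,2m)$. Enumerating the four possibilities yields the four families
$$(2kn+p,\,2lm+q),\qquad (2kn-p,\,2lm-q),\qquad (2kn+p,\,2lm-q),\qquad (2kn-p,\,2lm+q),$$
where $k,l$ are non-negative integers, with the ``$-p$'' and ``$-q$'' branches forcing $k\geq 1$ and $l\geq 1$ respectively to keep the indices positive (here the assumption $1\leq p\leq n-1$ and $1\leq q\leq m-1$ is used to rule out extra small-$k$, small-$l$ solutions). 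Each family comes with a definite sign dictated by the two ``$\pm 1/\pi^{2}$'' branches in Lemma \ref{lm1}. The single term $(k,l)=(0,0)$ from the first family reproduces $h_{p,q}$ exactly, which cancels the $-h_{p,q}$ in the definition of $\gamma_{n,m,p,q}$.

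To conclude, I would group the surviving terms by the pattern of $(k,l)$: contributions with $k\geq 1, l=0$ assemble into $P_{n,p,q}$ (only shifts in the first index survive), contributions with $k=0, l\geq 1$ assemble into $Q_{m,p,q}$ by symmetry, and contributions with both $k,l\geq 1$ assemble into $R_{n,m,p,q}$. I expect the main obstacle to be the careful sign and range bookkeeping needed to match the output of Lemma \ref{lm1} with the alternating factors $(-1)^{k}$, $(-1)^{l}$, $(-1)^{k+1}$ and with the ``$\pm$''-shorthand of the statement; in particular one must verify that the edge cases $k=0$ and $l=0$ contribute only to $P$ and $Q$ (respectively) and do not leak into $R$, and that pairing the ``$+$'' and ``$-$'' sub-branches of each family collapses to the compact $h_{2kn\pm p,\,2lm\pm q}$ notation.
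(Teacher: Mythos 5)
Your proposal is correct and follows essentially the same route as the paper: expand $h$ in the double sine series, apply the discrete orthogonality/aliasing relations of Lemma \ref{lm1} to identify the surviving indices $(r,s)$, and collect the aliased frequencies into $P_{n,p,q}$, $Q_{m,p,q}$ and $R_{n,m,p,q}$ (the paper merely performs the two one--dimensional reductions sequentially, first in $j$ then in $i$, rather than invoking the product form of Lemma \ref{lm1} at once). The sign and range bookkeeping you flag as the remaining obstacle is exactly what the paper's own proof glosses over as well, so nothing essential is missing relative to the published argument.
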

\begin{proof}
We have the following transform
\begin{eqnarray}
\dfrac{1}{m}\sum_{j=1}^m h(x_i,y_j)\phi_{q}(y_j) &=& \dfrac{1}{m}\sum_{j=1}^m \left(\sum_{r=1}^\infty \sum_{s=1}^\infty h_{r,s}\phi_r(x_i)\phi_s(y_j)\right)\phi_q(y_j)= \dfrac{1}{\pi}\sum_{r=1}^\infty h_{r,q}\phi_r(x_i) + S_{q}, \nonumber
\end{eqnarray}
where \[S_{q} = \dfrac{1}{\pi}\sum_{r=1}^{\infty} \phi_r(x_i)\sum_{l=1}^{\infty}(-1)^l h_{r,2lm \pm q}.\]
It follows that
\begin{eqnarray}
\dfrac{1}{n}\sum_{i=1}^{n}\left(\dfrac{1}{m}\sum_{j=1}^m h(x_i,y_j)\phi_q(y_j)\right)\phi_p(x_i) &=& \dfrac{1}{n}\sum_{i=1}^{n}\left(\dfrac{1}{\pi}\sum_{r=1}^\infty h_{r,q}\phi_r(x_i)\right)\phi_p(x_i) +  \dfrac{1}{n}\sum_{i=1}^{n}S_{q}\phi_p(x_i)\nonumber\\
&=& \dfrac{1}{\pi^2}(h_{p,q} + P_{n,p,q} + Q_{m,p,q} + R_{n,m,p,q}).\nonumber
\end{eqnarray} 
So the equality \eqref{gamma0} holds. 
\end{proof}
Now, we consider the discretization bias of Fourier coefficient $f_{p,q}(t)$ of the function $f(x,y,t)$ from the data--set. For convenient, we recall that
\begin{eqnarray}
f_{p,q}(t) &=& \langle f(\cdot,\cdot,t), \phi_{p,q}\rangle,\nonumber\\
f(x,y,t) &=& \sum_{p=1}^{\infty}\sum_{q=1}^{\infty}f_{p,q}(t)\phi_{p,q}(x,y).\nonumber
\end{eqnarray}
As in Lemma \ref{lm_hpq}, we can get similarly
\begin{lemma}
\label{lm_f_{pq}}
Assume that $f \in C([0,T];C^1(\overline{\Omega}))$, $p=\overline{1,n-1}$ and $q = \overline{1,m-1}$. Put
\begin{equation}
\eta_{n,m,p,q}(t) = \dfrac{\pi^2}{nm}\sum_{i=1}^{n}\sum_{j=1}^{m}f(x_i,y_j,t)\phi_{p,q}(x_i,y_j) - f_{p,q}(t).
\label{f_pq}
\end{equation}
Then
\begin{equation}
\eta_{n,m,p,q}(t) = P'_{n,p,q}(t) + Q'_{m,p,q}(t) + R'_{n,m,p,q}(t),
\label{eta0}
\end{equation}
with
\begin{eqnarray}
P'_{n,p,q}(t) &=&\sum_{k=1}^{\infty}(-1)^k f_{2kn \pm p,q}(t), \qquad Q'_{m,p,q}(t) = \sum_{l=1}^{\infty}(-1)^l f_{p,2lm \pm q}(t), \nonumber\\
R'_{n,m,p,q}(t) &=&\sum_{k=1}^\infty\sum_{l=1}^\infty(-1)^{l+k} (f_{2kn \pm p,2lm + q}(t)+f_{2kn \pm p,2lm - q}(t)).\nonumber
\end{eqnarray}
\end{lemma}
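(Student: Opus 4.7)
The plan is to repeat the argument of Lemma \ref{lm_hpq} verbatim, with $t$ held fixed and $f(\cdot,\cdot,t)$ playing the role of $h$. The hypothesis $f \in C([0,T];C^1(\overline{\Omega}))$ guarantees that for every fixed $t \in [0,T]$ the function $f(\cdot,\cdot,t)$ lies in $C^1(\overline{\Omega})$, which is precisely the regularity used in the proof of Lemma \ref{lm_hpq} to justify the two-dimensional Fourier expansion $f(x_i,y_j,t) = \sum_{r=1}^{\infty}\sum_{s=1}^{\infty} f_{r,s}(t)\phi_r(x_i)\phi_s(y_j)$ and the interchange of this pointwise-convergent series with the finite grid sums.

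First I would fix $t$ and substitute this expansion into the right-hand side of \eqref{f_pq}. Swapping the infinite $s$-sum with the finite $j$-sum and invoking Lemma \ref{lm1} in the $(q,s)$ direction gives the direct analog of the quantity $S_q$ in the proof of Lemma \ref{lm_hpq}, namely
\[\dfrac{1}{m}\sum_{j=1}^{m} f(x_i,y_j,t)\phi_q(y_j) = \dfrac{1}{\pi}\sum_{r=1}^{\infty} f_{r,q}(t)\phi_r(x_i) + \dfrac{1}{\pi}\sum_{r=1}^{\infty} \phi_r(x_i)\sum_{l=1}^{\infty}(-1)^l f_{r,2lm \pm q}(t).\]
Multiplying by $\phi_p(x_i)/n$, summing over $i$, and applying Lemma \ref{lm1} a second time in the $(p,r)$ direction then collapses each $r$-sum either onto $r=p$ (producing $f_{p,q}(t)$ together with $Q'_{m,p,q}(t)$) or onto the aliased indices $r = 2kn \pm p$ (producing $P'_{n,p,q}(t)$ together with $R'_{n,m,p,q}(t)$).

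Assembling the four contributions, multiplying through by $\pi^2$, and subtracting $f_{p,q}(t)$ should deliver \eqref{eta0}. The only genuine obstacle is bookkeeping: one must track the signs and shifted indices arising from the two alternatives $(r,s) \pm (p,q) = (2kn,2lm)$ and $(r,s) \pm (-p,q) = (2kn,2lm)$ of Lemma \ref{lm1}, which together generate the factors $(-1)^k$, $(-1)^l$, and $(-1)^{k+l}$ and the combined $\pm$ notation appearing in $P'_{n,p,q}(t)$, $Q'_{m,p,q}(t)$, and $R'_{n,m,p,q}(t)$. Since this is exactly the sign pattern already established for the $h$-coefficients in Lemma \ref{lm_hpq}, and since the whole derivation is uniform in $t$, I expect no further difficulty beyond transcribing the proof of Lemma \ref{lm_hpq} with the substitutions $h \mapsto f(\cdot,\cdot,t)$ and $h_{p,q} \mapsto f_{p,q}(t)$.
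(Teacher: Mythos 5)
Your proposal is correct and matches the paper exactly: the paper gives no separate proof of this lemma, stating only that it follows ``as in Lemma \ref{lm_hpq}'', which is precisely the verbatim substitution $h \mapsto f(\cdot,\cdot,t)$, $h_{r,s} \mapsto f_{r,s}(t)$ (with $t$ fixed) that you describe. Your intermediate identity for $\frac{1}{m}\sum_{j} f(x_i,y_j,t)\phi_q(y_j)$ and the subsequent application of Lemma \ref{lm1} in the $(p,r)$ direction reproduce the paper's argument for Lemma \ref{lm_hpq} step for step.
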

Combining equalities \eqref{h_pq}, \eqref{h_p,q} and \eqref{f_pq} we can obtain a data--explicit form for $\theta(x,y)$
\begin{theorem}\label{thr1}
Let $M,N \in \mathbb{N}$ such that $0<N\leq n$, $0<M\leq m$. Assume that the functions $h$, $f$ are fulfilled Lemma \ref{lm_hpq} and Lemma \ref{lm_f_{pq}} and that $u$ is as in \eqref{solution1}. Then
\begin{eqnarray*}\label{theta1}
\theta(x,y) &=& \sum_{p=1}^N \sum_{q=1}^M\Bigg[\dfrac{\pi^2}{nm}\sum_{i=1}^n\sum_{j=1}^m \left(h(x_i,y_j)\lambda_{p,q}^{-1}(T)-
\int_{0}^{T}\lambda_{p,q}^{-1}(\tau)f(x_i,y_j,\tau)\mathrm{d}\tau\right)\phi_{p,q}(x_i,y_j)\nonumber \\ && - \: \left(\gamma_{n,m,p,q}\lambda_{p,q}^{-1}(T)-\int_{0}^{T}\lambda_{p,q}^{-1}(\tau)\eta_{n,m,p,q}(\tau)\mathrm{d}\tau\right)\Bigg] \phi_{p,q}(x,y)\nonumber \\ && + \: \sum_{p=N+1}^\infty \sum_{q=1}^M \theta_{p,q}\phi_{p,q}(x,y) + \sum_{p=1}^N \sum_{q=M+1}^\infty \theta_{p,q}\phi_{p,q}(x,y) +  \sum_{p=N+1}^\infty \sum_{q=M+1}^\infty \theta_{p,q}\phi_{p,q}(x,y),
\end{eqnarray*}
where $\gamma_{n,m,p,q}, \eta_{n,m,p,q}$ are as in Lemma \ref{lm_hpq} and Lemma \ref{lm_f_{pq}}.
\end{theorem}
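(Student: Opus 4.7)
The plan is to start from the Fourier expansion
\[\theta(x,y) = \sum_{p=1}^\infty\sum_{q=1}^\infty \theta_{p,q}\phi_{p,q}(x,y),\]
split the double series into four blocks according to whether $p\le N$ or $p>N$, and whether $q\le M$ or $q>M$, and rewrite only the ``low--frequency'' block $\{1,\dots,N\}\times\{1,\dots,M\}$ in terms of the observed grid values, leaving the other three tail blocks as sums of $\theta_{p,q}\phi_{p,q}$. The three tail blocks are already exactly the last three summands in the theorem.

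For each $(p,q)$ in the low--frequency block, I would first invert the forward formula \eqref{h_pq} to get
\begin{equation*}
\theta_{p,q} \;=\; h_{p,q}\lambda^{-1}_{p,q}(T) \;-\; \int_0^T \lambda^{-1}_{p,q}(\tau)\,f_{p,q}(\tau)\,\mathrm{d}\tau ,
\end{equation*}
and then substitute $h_{p,q}$ and $f_{p,q}(\tau)$ by their discrete representations coming from Lemma \ref{lm_hpq} and Lemma \ref{lm_f_{pq}}, namely
\begin{equation*}
h_{p,q} \;=\; \dfrac{\pi^2}{nm}\sum_{i=1}^n\sum_{j=1}^m h(x_i,y_j)\phi_{p,q}(x_i,y_j) \;-\; \gamma_{n,m,p,q},
\end{equation*}
\begin{equation*}
f_{p,q}(\tau) \;=\; \dfrac{\pi^2}{nm}\sum_{i=1}^n\sum_{j=1}^m f(x_i,y_j,\tau)\phi_{p,q}(x_i,y_j) \;-\; \eta_{n,m,p,q}(\tau).
\end{equation*}
Because the inner grid sum over $(i,j)$ is finite, pulling it out of the $\tau$--integral is legal, and regrouping the ``discrete data'' pieces together and the ``bias'' pieces together reproduces precisely the bracketed expression in the theorem. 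Multiplying by $\phi_{p,q}(x,y)$ and summing over $1\le p\le N$, $1\le q\le M$ then delivers the first double sum of the claimed identity.

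This computation is mechanical rather than conceptually hard; the main things to be careful about are (i) the range of validity of the two lemmas, which is stated for $p=\overline{1,n-1}$, $q=\overline{1,m-1}$, so in applying them in the low--frequency block one implicitly uses $N\le n-1$, $M\le m-1$ (the boundary cases $p=n$, $q=m$ can be treated separately, since there $\phi_p(x_i)$ reduces to $\pm\sqrt{2/\pi}$ and $\delta_{p,q,r,s}$ can be recomputed directly from Lemma \ref{lm1}), and (ii) the justification of interchanging the finite sum $\sum_{p=1}^N\sum_{q=1}^M$, the $\tau$--integral, and the grid sum, which is immediate because all three operations are finite on that block. The three high--frequency tail sums converge in $L^2(\Omega)$ by Parseval's identity applied to $\theta\in L^2(\Omega)$, so the whole identity holds in the $L^2$ sense.
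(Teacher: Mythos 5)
Your argument is correct and is exactly the route the paper intends: the paper gives no separate proof of Theorem \ref{thr1} beyond the remark that it follows by ``combining equalities \eqref{h_pq}, \eqref{h_p,q} and \eqref{f_pq}'', which is precisely your inversion of \eqref{h_pq} followed by substitution of the defining identities for $\gamma_{n,m,p,q}$ and $\eta_{n,m,p,q}(\tau)$ and regrouping. Your added remarks on the ranges $p\le n-1$, $q\le m-1$ and on the $L^2$ convergence of the tail sums are careful points the paper leaves implicit.
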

%##########################################################
\section{The ill--posedness of the problem}
 From the theorem, we can consider the ill--posedness of our problem. We investigate a
concrete model of data and prove the instability of the solution in the case of random noise data.
Suppose that $h(x,y)=f(x,y,t)\equiv0$ and $a(t)=1$, $u(x,y,T)=0$. The unique solution of \eqref{prob2s}--\eqref{cd1} 
is $u(x,y,t)\equiv0$. 

Let the random noise data be
\begin{eqnarray*}
g_{ij}(t) &=& 0 + \vartheta\xi_{ij}(t),\\
d_{ij} &=& 0 + \epsilon_{ij},\qquad\epsilon_{ij}\mathop\sim\limits^{i.i.d}\mathcal{N}(0,n^{-1}m^{-1}),
\end{eqnarray*}
for $i=\overline{1,n},j=\overline{1,m}$.  We shall construct the solution 
of \eqref{prob2s}--\eqref{cd1} with respect to the random data.
 Using the idea of the trigonometric regression, we put 
\begin{eqnarray*}
\overline{h}^{nm}(x,y)&=&\sum_{p=1}^{n-1}\sum_{q=1}^{m-1}\overline{h}_{p,q}^{mn}\phi_{p,q}(x,y),\\
\overline{f}^{nm}(x,y,t)&=&\sum_{p=1}^{n-1}\sum_{q=1}^{m-1}\overline{f}_{p,q}^{mn}(t)\phi_{p,q}(x,y),
\end{eqnarray*}
where
\[\overline{h}_{p,q}^{nm}=\frac{\pi^2}{nm}\sum_{i=1}^{n}\sum_{j=1}^{m}\epsilon_{ij}\phi_{p,q}(x_i,y_j),\]
\[\overline{f}_{p,q}^{nm}(t)=\frac{\pi^2\vartheta}{nm}\sum_{i=1}^{n}\sum_{j=1}^{m}\xi_{ij}(t)\phi_{p,q}(x_i,y_j).\]
The definition implies
\[\overline{\gamma}_{n,m,p,q}:=\frac{\pi^2}{nm}\sum_{i=1}^{n}\sum_{j=1}^{m}\epsilon_{ij}\phi_{p,q}(x_i,y_j)-\overline{h}_{p,q}^{nm}=0,\]
\[\overline{\eta}_{n,m,p,q}(t):=\frac{\pi^2\vartheta}{nm}\sum_{i=1}^{n}\sum_{j=1}^{m}\xi_{ij}(t)\phi_{p,q}(x_i,y_j)
-\overline{f}_{p,q}^{nm}(t)=0.\]
By the orthogonal property stated in  Lemma \ref{tc1}, we can verify directly that
$$\overline{h}_{nm}(x_i,y_j)=d_{ij}, \overline{f}_{nm}(x_i,y_j,t)=g_{ij}(t).$$
 Let
$\overline{u}=\overline{u}(x,y,t)$ be the solution of the system 
\begin{equation*}
\label{prob2sr}
\left\{\begin{array}{l l l}
\overline{u}_t - (\overline{u}_{xx} + \overline{u}_{yy}) & = \overline{f}^{nm}(x,y,t), & \qquad (x,y,t) \in \Omega\times (0,T),\\
\overline{u}(x,y,T) & = \overline{h}^{nm}(x,y), & \qquad (x,y) \in\overline{\Omega},
\end{array}
\right.
\end{equation*}
subject to the Dirichlet condition
\begin{equation*}
\overline{u}(0,y,t) = \overline{u}(\pi,y,t) = \overline{u}(x,0,t) = \overline{u}(x,\pi,t) = 0.
\label{cd1r}
\end{equation*}
We can remark  that $\overline{u}(\cdot,\cdot,t)$ is a trigonometric polynomial with order $<n$ (with respect to the variable $x$) and order $<m$ (with respect to the variable $y$). Putting $\overline{\theta}^{nm}(x,y)=\overline{u}(x,y,0)$, we get in view of the remark that
$$\overline{\theta}_{p,q}^{nm}:=\langle \overline{\theta}^{nm},\phi_{p,q}\rangle,$$ 
for $p\ge n$ or $q\ge m$.

Applying Theorem \ref{thr1} with $N=n-1,M=m-1$, we obtain 
\[\overline{\theta}^{nm}(x,y)=\sum_{p=1}^{n-1}\sum_{q=1}^{m-1}\left(\overline{h}_{p,q}^{nm}-\int_{0}^{T}\lambda_{p,q}^{-1}(\tau)\overline{f}_{p,q}^{nm}(\tau)\text{d}\tau\right)\lambda_{p,q}^{-1}(T)\phi_{p,q}(x,y),\]
thus
\begin{eqnarray*}
\|\overline{\theta}^{nm}\|^2&=&\sum_{p=1}^{n-1}\sum_{q=1}^{m-1}\left(\overline{h}_{p,q}^{nm}-\int_{0}^{T}\lambda_{p,q}^{-1}(\tau)\overline{f}_{p,q}^{nm}(\tau)\text{d}\tau\right)^2\lambda_{p,q}^{-2}(T)\\
&\ge& \left(\overline{h}_{n-1,m-1}^{nm}-\int_{0}^{T}\lambda_{n-1,m-1}^{-1}(\tau)\overline{f}_{n-1,m-1}^{nm}(\tau)\text{d}\tau\right)^2\lambda_{n-1,m-1}^{-2}(T).
\end{eqnarray*} 
Assuming that the random quantities $\epsilon_{ij}$ and $\xi_{ij}(t)$ are mutually independent, we can obtain by direct computation that 
$$\lim_{n,m\to\infty}\mathbb{E}\|\overline{f}^{n,m}(\cdot,\cdot,t)\|^2=0,\qquad\forall t\in[0,T].$$
Moreover, by the Parseval equality, we have 
\[\|\overline{h}^{nm}\|^2=\sum_{p=1}^{n-1}\sum_{q=1}^{m-1}\left(\overline{h}_{p,q}^{mn}\right)^2=\sum_{p=1}^{n-1}\sum_{q=1}^{m-1}\frac{\pi^4}{n^2m^2}\left(\sum_{i=1}^{n}\sum_{j=1}^{m}\epsilon_{ij}\phi_{p,q}(x_i,y_j)\right)^2.\]
Using Lemma \ref{lm1}, we obtain
\[\mathbb{E}\|\overline{h}^{nm}\|^2=\sum_{p=1}^{n-1}\sum_{q=1}^{m-1}\frac{\pi^2}{nm}\sum_{i=1}^{n}\sum_{j=1}^{m}\mathbb{E}\epsilon_{ij}^2=\frac{(n-1)(m-1)}{n^2m^2}.\] Thus \[\lim_{n,m\to\infty}\mathbb{E}\|\overline{h}^{nm}\|^2=0.\]
On the other hand, we claim that $\mathbb{E}\|\overline{\theta}_{nm}\|^2\to\infty$ as $n,m\to\infty$. In fact, we have
\begin{eqnarray*}
\mathbb{E}\|\overline{\theta}_{nm}\|^2 &\ge& \left[\mathbb{E}(\overline{h}_{n,m}^{nm})^2+\mathbb{E}\left(\int_{0}^{T}e^{-\tau(n^2+m^2)}\overline{f}_{p,q}^{nm}(\tau)\text{d}\tau\right)^2\right]e^{2T(n^2+m^2)} \ge \frac{\pi^2}{n^2m^2}e^{2T(n^2+m^2)}
\end{eqnarray*} and \[\mathbb{E}\|\overline{\theta}_{nm}\|^2\rightarrow+\infty \text{ as in } n,m\rightarrow+\infty.\]
From the latter inequality, we can deduce that the problem is ill--posed. Moreover, as classified in \cite{r1}, the problem is severely ill--posed. Hence, a regularization is in order.

%##########################################################
\section{Estimators and Convergence results}
%\section{Estimator and convergence rate}

In Section 3, we have known that the problem is ill--posed. To deal with it, we have some regularization methods, for instance, one can employ the quasi--boundary value method (QBV) \cite{r18}; or use the Tikhonov method.... In this paper, we use the truncated method in analogy to 1--dimension problem of \cite{r3}. The advantage of this method is that it seems to be more convenient for computation, because we can control stopping criterion.

Let two natural numbers $N$ and $M$ be the regularization parameters. To construct an estimator $\hat{\theta}_{n,m,N,M}$, we first note that the quantities $\gamma_{n,m,p,q}$ and $\eta_{n,m,p,q}$ are small when $n$ and $m$ go to infinity (see Lemmas \ref{norm_h} and \ref{eta}). This leads to
\begin{eqnarray}
\hat{h}_{p,q} &=& \frac{\pi^2}{nm}\sum_{i=1}^n\sum_{j=1}^m d_{ij}\phi_{p,q}(x_i,y_j) \nonumber,\\
\hat{f}_{p,q}(t) &=& \frac{\pi^2}{nm}\sum_{i=1}^{n}\sum_{j=1}^{m}g_{ij}(t)\phi_{p,q}(x_i,y_j), \nonumber 
\end{eqnarray}
where $p = \overline{1,n-1}, q = \overline{1,m-1}$. Based on that and equation \eqref{h_pq}, $\hat{\theta}_{n,m,p,q}$ may be defined as
\begin{equation*}
\hat{\theta}_{n,m,q,p} = \hat{h}_{p,q}\lambda_{p,q}^{-1}(T) - \int_0^T \lambda_{p,q}^{-1}(\tau)\hat{f}_{p,q}(\tau) \mathrm{d}\tau.
\end{equation*}
This, together with an application of the truncated expansion method to the formula of $\theta$ in Theorem \ref{thr1}, leads to
\begin{equation}\label{theta2}
\hat{\theta}_{n,m,N,M}(x,y) = \sum_{p=1}^N \sum_{q=1}^M \hat{\theta}_{n,m,q,p}\phi_{p,q}(x,y).
\end{equation}

Now, we study the convergence rate, which is the main result in this paper. Hereafter, for any positive numbers $\alpha, \beta$ and $E$, we denote the Sobolev class of functions by
\begin{equation*}\label{ass1}
\mathscr{C}_{\alpha,\beta,E} = \left\{g \in L^2(\Omega): \sum_{p=1}^\infty\sum_{q=1}^\infty p^{2\alpha}q^{2\beta}\left|\big\langle g,\phi_{p,q}\big\rangle\right|^2 \le E^2\right\}.
\end{equation*}
The convergence rate of estimator $\hat{\theta}_{n,m,N,M}(x,y)$ in \eqref{theta2} is presented by Theorem \ref{thr2}. In order to prove the theorem, we need the evaluation for $\mathbb{E}\left\|\hat{\theta}_{n,m,N,M} - \theta \right\|^2$. In fact, this estimate procedure has to undergo some important steps. In the first step, we have
\begin{lemma}\label{norm_theta0}
Let the regression models \eqref{st_md} and \eqref{md2} hold. Assume that $\theta \in \mathscr{C}_{\alpha,\beta,E}$ and $0<N<n,0<M<m$. Then
\begin{eqnarray}\label{nor0}
\lefteqn{\left\|\hat{\theta}_{n,m,N,M} - \theta \right\|^2}\nonumber\\ &=& 4\sum_{p=1}^N \sum_{q=1}^M\Bigg[\dfrac{\pi^2}{nm}\sum_{i=1}^n\sum_{j=1}^m\left(\lambda^{-1}_{p,q}(T)\sigma_{ij}\epsilon_{ij} - \vartheta\int_0^T\lambda^{-1}_{p,q}(\tau)\xi_{ij}(\tau)\mathrm{d}\tau\right)\phi_{p,q}(x_i,y_j)\nonumber\\ && - \: \int_0^T\lambda^{-1}_{p,q}(\tau)\eta_{n,m,p,q}(\tau)\mathrm{d}\tau + \gamma_{n,m,p,q}\lambda_{p,q}^{-1}(T)\Bigg]^2 \nonumber\\ && + \: 4\left(\sum_{p=N+1}^\infty \sum_{q=1}^M \theta^2_{p,q} + \sum_{p=1}^N \sum_{q=M+1}^\infty \theta_{p,q}^2 + \sum_{p=N+1}^\infty \sum_{q=M+1}^\infty \theta_{p,q}^2\right),
\end{eqnarray}
where we recall $\theta_{p,q} = \langle\theta, \phi_{p,q}\rangle$.
\end{lemma}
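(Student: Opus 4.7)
The plan is a direct computation combining Parseval's identity with the explicit bias formulas for $\gamma_{n,m,p,q}$ and $\eta_{n,m,p,q}$ already established, together with the coefficient representation of $\theta_{p,q}$ coming from \eqref{h_pq}.

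First I would write
\begin{equation*}
\hat{\theta}_{n,m,N,M} - \theta \;=\; \sum_{p=1}^N\sum_{q=1}^M\bigl(\hat{\theta}_{n,m,q,p}-\theta_{p,q}\bigr)\phi_{p,q} \;-\; \sum_{p=N+1}^{\infty}\sum_{q=1}^M \theta_{p,q}\phi_{p,q} \;-\; \sum_{p=1}^N\sum_{q=M+1}^{\infty} \theta_{p,q}\phi_{p,q} \;-\; \sum_{p=N+1}^{\infty}\sum_{q=M+1}^{\infty} \theta_{p,q}\phi_{p,q}.
\end{equation*}
Since $\{\phi_{p,q}\}$ is orthonormal, Parseval's identity produces the three tail blocks of the right-hand side directly and reduces the remaining task to identifying the coefficients $\hat{\theta}_{n,m,q,p}-\theta_{p,q}$ inside the truncation window $[1,N]\times[1,M]$.

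Second, I would combine \eqref{h_pq}, which gives $\theta_{p,q} = h_{p,q}\lambda_{p,q}^{-1}(T) - \int_0^T\lambda_{p,q}^{-1}(\tau)f_{p,q}(\tau)\mathrm{d}\tau$, with the definitions of $\hat{h}_{p,q}$ and $\hat{f}_{p,q}$. Inserting the regression models \eqref{st_md} and \eqref{md2} into the quadrature sums and invoking the bias definitions \eqref{h_p,q} and \eqref{f_pq} yields
\begin{equation*}
\hat{h}_{p,q} - h_{p,q} \;=\; \gamma_{n,m,p,q} + \frac{\pi^2}{nm}\sum_{i=1}^n\sum_{j=1}^m \sigma_{ij}\epsilon_{ij}\phi_{p,q}(x_i,y_j),
\end{equation*}
\begin{equation*}
\hat{f}_{p,q}(\tau) - f_{p,q}(\tau) \;=\; \eta_{n,m,p,q}(\tau) + \frac{\pi^2\vartheta}{nm}\sum_{i=1}^n\sum_{j=1}^m \xi_{ij}(\tau)\phi_{p,q}(x_i,y_j).
\end{equation*}
Plugging these into $\hat{\theta}_{n,m,q,p}-\theta_{p,q} = (\hat{h}_{p,q}-h_{p,q})\lambda_{p,q}^{-1}(T) - \int_0^T\lambda_{p,q}^{-1}(\tau)(\hat{f}_{p,q}(\tau)-f_{p,q}(\tau))\mathrm{d}\tau$, interchanging the finite $(i,j)$-sum with the $\tau$-integral, and pooling the two noise families under the shared weight $\phi_{p,q}(x_i,y_j)$ reproduces exactly the bracketed expression in the lemma. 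Squaring and summing over $1\le p\le N,\,1\le q\le M$ then delivers the first block of the right-hand side, while the three tail sums from Parseval supply the remaining blocks, assembling the claimed identity.

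The main bookkeeping challenge is coordinating the two independent noise families so that they assemble into a single $(i,j)$-sum: the $\epsilon_{ij}$ enter only through the final-data piece $\lambda_{p,q}^{-1}(T)\sigma_{ij}\epsilon_{ij}$, while the $\xi_{ij}(\tau)$ enter through the source piece carrying the overall minus sign $-\vartheta\int_0^T\lambda_{p,q}^{-1}(\tau)\xi_{ij}(\tau)\mathrm{d}\tau$ inherited from the Duhamel representation of $\theta_{p,q}$. One must propagate this sign and swap sum and integral (trivially justified, since the $(i,j)$-sum is finite) so that the two pieces factor simultaneously through $\phi_{p,q}(x_i,y_j)$ and combine into the displayed bracket; once this purely algebraic reorganisation is carried out, no further estimation is required and the identity follows directly from Parseval.
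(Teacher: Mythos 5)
Your proposal is correct and follows essentially the same route as the paper's own proof: Parseval's identity to peel off the three tail blocks, then the decomposition of $\hat{h}_{p,q}-h_{p,q}$ and $\hat{f}_{p,q}(\tau)-f_{p,q}(\tau)$ into bias plus noise via the regression models and the definitions of $\gamma_{n,m,p,q}$ and $\eta_{n,m,p,q}$, inserted into the Duhamel formula for $\theta_{p,q}$. (You even get the sign of $\gamma_{n,m,p,q}\lambda_{p,q}^{-1}(T)$ consistent with the statement where the paper's intermediate display carries a stray minus sign; note also that neither your argument nor the paper's accounts for the overall factor $4$ in \eqref{nor0} --- a straight Parseval computation yields coefficient $1$.)
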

\begin{proof}
By the Parseval equality, we have
\begin{eqnarray}
\left\|\hat{\theta}_{n,m,N,M} - \theta \right\|^2 &=& 4\sum_{p=1}^N \sum_{q=1}^M \left(\hat{A}_{p,q} - \theta_{p,q}\right)^2 \nonumber\\ && +\: 4\left(\sum_{p=N+1}^\infty \sum_{q=1}^M \theta_{p,q}^2 + \sum_{p=1}^N \sum_{q=M+1}^\infty \theta_{p,q}^2 + \sum_{p=N+1}^\infty \sum_{q=M+1}^\infty \theta_{p,q}^2\right).\nonumber
\end{eqnarray}
From the formula of $\hat{A}_{p,q}$ and $\theta_{p,q},p=\overline{1,N},q=\overline{1,M},$ we get
\begin{eqnarray}
\hat{A}_{p,q} - \theta_{p,q} &=& \dfrac{\pi^2}{nm}\lambda_{p,q}^{-1}(T)\sum_{i=1}^{n}\sum_{j=1}^{m}\sigma_{ij}\epsilon_{ij}\phi_{p,q}(x_i,y_j) \nonumber\\ && - \: \int_0^T\lambda_{p,q}^{-1}(\tau)\left[\hat{f}_{p,q}(\tau) - f_{p,q}(\tau)\right]\mathrm{d}\tau - \gamma_{n,m,p,q}\lambda_{p,q}^{-1}(T),\nonumber
\end{eqnarray}
with
\begin{eqnarray}
\hat{f}_{p,q}(t) - f_{p,q}(t) &=& \dfrac{\pi^2\vartheta}{nm}\sum_{i=1}^{n}\sum_{j=1}^{m}\xi_{ij}(t)\phi_{p,q}(x_i,y_j) + \eta_{n,m,p,q}(t).\nonumber
\end{eqnarray}
Thus, we obtain~\eqref{nor0}.
\end{proof}
Now, we prove that $\gamma_{n,m,p,q}$ and $\eta_{n,m,p,q}$ are ``small" in an appropriate sense. We first have
\begin{lemma}
\label{norm_h}
Assume that $f(\cdot,\cdot,t) \in \mathscr{C}_{\alpha,\beta,E}$ for all $t \in [0,T]$ and  $\theta,h\in L^2(\Omega)$. Then
\begin{equation}
|h_{p,q}| \le \|\theta\|\lambda_{p,q}(T) + \dfrac{E}{p^\alpha q^\beta a_1(p^2+q^2)}.
\label{fourier_h1}
\end{equation}
\end{lemma}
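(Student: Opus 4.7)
The plan is to start from the closed-form expression \eqref{h_pq} for $h_{p,q}$, apply the triangle inequality to isolate the $\theta$-contribution from the source contribution, and then exploit the Sobolev decay of $f$ together with the lower bound $a(t)\ge a_1$ to control the time integral.

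First, from \eqref{h_pq} I would write
\[
|h_{p,q}| \;\le\; |\theta_{p,q}|\,\lambda_{p,q}(T) \;+\; \lambda_{p,q}(T)\int_0^T \lambda_{p,q}^{-1}(\tau)\,|f_{p,q}(\tau)|\,\mathrm{d}\tau.
\]
Since $\{\phi_{p,q}\}$ is orthonormal, Bessel's inequality gives $|\theta_{p,q}|\le\|\theta\|$, which handles the first term directly and produces the $\|\theta\|\lambda_{p,q}(T)$ summand.

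For the integral term, I would use two estimates in tandem. From the assumption $f(\cdot,\cdot,\tau)\in\mathscr{C}_{\alpha,\beta,E}$, the single summand of its defining series is bounded by $E^2$, so $|f_{p,q}(\tau)|\le E/(p^\alpha q^\beta)$ uniformly in $\tau\in[0,T]$. Next, using $A(t)=\int_0^t a(s)\,\mathrm{d}s$ and the hypothesis $a(t)\ge a_1$, I would rewrite
\[
\lambda_{p,q}(T)\lambda_{p,q}^{-1}(\tau)=\exp\!\bigl(-(p^2+q^2)(A(T)-A(\tau))\bigr)\le e^{-a_1(p^2+q^2)(T-\tau)}.
\]

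Combining the two bounds and carrying out the elementary integral,
\[
\int_0^T e^{-a_1(p^2+q^2)(T-\tau)}\,\mathrm{d}\tau
=\frac{1-e^{-a_1(p^2+q^2)T}}{a_1(p^2+q^2)}\le\frac{1}{a_1(p^2+q^2)},
\]
yields the required second term $E/(p^\alpha q^\beta a_1(p^2+q^2))$ and completes the estimate. There is no real obstacle here: once $|f_{p,q}(\tau)|\le E/(p^\alpha q^\beta)$ is extracted from the Sobolev-class hypothesis and the smoothing factor $\lambda_{p,q}(T)\lambda_{p,q}^{-1}(\tau)$ is controlled through $a_1$, the inequality is a direct integration. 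The only delicate point to state carefully is that the Sobolev bound on $f(\cdot,\cdot,\tau)$ must hold for every $\tau$ individually, which is precisely what the hypothesis provides.
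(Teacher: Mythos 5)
Your proposal is correct and follows essentially the same route as the paper's proof: triangle inequality on \eqref{h_pq}, the bounds $|\theta_{p,q}|\le\|\theta\|$ and $|f_{p,q}(\tau)|\le E/(p^\alpha q^\beta)$, and the estimate $\lambda_{p,q}(T)\lambda_{p,q}^{-1}(\tau)\le e^{-a_1(p^2+q^2)(T-\tau)}$ followed by direct integration. No discrepancies to report.
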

\begin{proof}
From \eqref{h_pq} and $|f_{p,q}(\cdot)|\le E/(p^\alpha q^\beta)$, we have
\begin{eqnarray}
|h_{p,q}| &\le& \left(|\theta_{p,q}| + \int_0^T \lambda^{-1}_{p,q}(\tau)|f_{p,q}(\tau)|\mathrm{d}\tau\right)\lambda_{p,q}(T)\nonumber\\
&\le& \left(\|\theta\| +  \dfrac{E}{p^\alpha q^\beta}\int_0^T \lambda^{-1}_{p,q}(\tau)\mathrm{d}\tau\right)\lambda_{p,q}(T)\nonumber\\
&\le& \|\theta\|\lambda_{p,q}(T) + \dfrac{E}{p^\alpha q^\beta}\int_0^T e^{-(p^2+q^2)\int_{\tau}^{T}a(s)\mathrm{d}s} \mathrm{d}\tau. \nonumber
\end{eqnarray}
Since $a(t)\geq a_1$, we deduce
\begin{eqnarray}
|h_{p,q}|&\le& \|\theta\| \lambda_{p,q}(T) + \dfrac{E}{p^\alpha q^\beta} \int_{0}^{T} e^{a_1(\tau - T)(p^2+q^2)} \mathrm{d}\tau\nonumber\\
&\le& \|\theta\| \lambda_{p,q}(T) + E\dfrac{1 - e^{-a_1T(p^2+q^2)}}{p^\alpha q^\beta a_1(p^2+q^2)}\le \|\theta\| \lambda_{p,q}(T) + \dfrac{E}{p^\alpha q^\beta a_1(p^2+q^2)}.\nonumber
\end{eqnarray}
This completes the proof.
\end{proof}
Now, in the next lemma we shall give an upper bound for the discretization bias of $h_{p,q}$. In fact, we have
\begin{lemma}
\label{gamma}
Suppose that $f(\cdot,\cdot,t) \in \mathscr{C}_{\alpha,\beta,E}$ and that $p=\overline{1,n-1},q=\overline{1,m-1}$. With $\gamma_{n,m,p,q}$ defined by~\eqref{h_p,q}, there is a generic constant $C$ independent of
$n,m,p,q$ such that
\begin{equation}
|\gamma_{n,m,p,q}| \le 
 Cn^{-1-\alpha/2}m^{-1-\beta/2}.
\label{gamma1_0}
\end{equation}
\end{lemma}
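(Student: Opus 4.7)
The plan is to decompose $\gamma_{n,m,p,q}=P_{n,p,q}+Q_{m,p,q}+R_{n,m,p,q}$ using Lemma~\ref{lm_hpq} and estimate each of the three pieces by substituting the pointwise bound
\[|h_{r,s}|\le \|\theta\|\lambda_{r,s}(T)+\frac{E}{r^{\alpha}s^{\beta}a_1(r^2+s^2)}\]
furnished by Lemma~\ref{norm_h}, with $(r,s)$ ranging over the shifted indices $(2kn\pm p,q)$, $(p,2\ell m\pm q)$, or $(2kn\pm p,2\ell m\pm q)$. The final bound will follow by collecting the three contributions and absorbing every absolute constant into the generic $C$.

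The first step is to dispense with the exponential contribution. Since $p\le n-1$ and $q\le m-1$, every shifted first index satisfies $2kn\pm p\ge kn$, and every shifted second index satisfies $2\ell m\pm q\ge \ell m$. Consequently the factor $\lambda_{r,s}(T)=e^{-A(T)(r^2+s^2)}$ is at worst of order $e^{-a_1 T n^2}$ (or $e^{-a_1 Tm^2}$), and even after summation in $k$ and $\ell$ the resulting series is super-polynomially small in $n$ and $m$; it is therefore dominated by any power rate and can be absorbed into $C$.

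The bulk of the work is on the polynomial part. For the double sum $R_{n,m,p,q}$ the estimate is cleanest: applying AM-GM $r^2+s^2\ge 2rs$ gives $|h_{r,s}|\le CE\, r^{-\alpha-1}s^{-\beta-1}$ and, after substituting $r\ge kn$, $s\ge \ell m$, the double sum factors as
\[\Big(\sum_{k\ge 1}k^{-\alpha-1}\Big)\Big(\sum_{\ell\ge 1}\ell^{-\beta-1}\Big)\cdot n^{-\alpha-1}m^{-\beta-1},\]
which is clearly bounded by $Cn^{-1-\alpha/2}m^{-1-\beta/2}$ as soon as $\alpha,\beta\ge 0$. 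For the single sums $P_{n,p,q}$ and $Q_{m,p,q}$ the decay in the ``missing'' variable is not immediate from the crude AM-GM. To produce it I would use the weighted AM-GM $r^2+q^2\ge r^{2s_0}q^{2(1-s_0)}$ for a parameter $s_0\in[0,1]$ chosen so that $\sum_{k}(2kn\pm p)^{-\alpha-2s_0}$ yields exactly the target decay $n^{-1-\alpha/2}$, then handle the residual $q$-factor $q^{-\beta-2(1-s_0)}$ by combining $1\le q\le m-1$ with the freedom in $s_0$ to recover $m^{-1-\beta/2}$. The piece $Q_{m,p,q}$ is estimated by the fully symmetric argument.

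The main obstacle, as I see it, lies precisely in this balancing step for $P$ and $Q$: a naive application of $r^2+q^2\ge 2rq$ only produces $|P_{n,p,q}|\le Cn^{-\alpha-1}q^{-\beta-1}$, which carries no intrinsic decay in $m$ and is therefore \emph{not} directly of the form $n^{-1-\alpha/2}m^{-1-\beta/2}$. One must interpolate carefully, trading part of the $n$-decay for an $m$-factor via the weighted AM-GM exponent, so that the three pieces fit together into the announced product bound. Once this balance is fixed, the only remaining work is standard: verify convergence of the series $\sum_{k\ge 1}k^{-\gamma}$, use the uniform lower bound $a(t)\ge a_1$ as in the proof of Lemma~\ref{norm_h}, and collect the $a_1$-, $E$- and $\|\theta\|$-dependent constants into a single $C$ independent of $n,m,p,q$.
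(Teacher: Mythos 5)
Your decomposition of $\gamma_{n,m,p,q}$ into $P_{n,p,q}+Q_{m,p,q}+R_{n,m,p,q}$, your termwise use of Lemma \ref{norm_h}, and your treatment of the double sum $R_{n,m,p,q}$ via $r^2+s^2\ge 2rs$ all coincide with the paper's proof, and that part is sound. The genuine gap sits exactly where you place the ``main obstacle,'' but the repair you propose cannot work. In $P_{n,p,q}=\sum_k(-1)^k h_{2kn\pm p,q}$ the second index is the \emph{fixed} $q$, which may equal $1$; the quantity $P_{n,p,1}$ does not depend on $m$ at all, so no algebraic rearrangement of the bound $E/(r^\alpha q^\beta a_1(r^2+q^2))$ can manufacture a factor $m^{-1-\beta/2}$. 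Concretely, in your weighted AM--GM the residual factor is $q^{-\beta-2(1-s_0)}$ with $1\le q\le m-1$; its supremum over that range is attained at $q=1$ and equals $1$, independently of $m$ and of $s_0$ (and, incidentally, the requirement $\alpha+2s_0=1+\alpha/2$ forces $s_0=(2-\alpha)/4$, which lies in $[0,1]$ only for $\alpha\le 2$). The same objection applies to the exponential part of $P_{n,p,q}$: it is $O(e^{-cn})$ but carries no decay in $m$, so your claim that it is ``super-polynomially small in $n$ \emph{and} $m$'' is false for the single sums.

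For comparison, the paper's own proof is cruder at this point: it discards the $q$--dependence entirely, obtaining $|P_{n,p,q}|\le K_{1,n,m}=O(n^{-2-\alpha})$ and $|Q_{m,p,q}|\le K_{2,n,m}=O(m^{-2-\beta})$, and then simply asserts $2(K_{1,n,m}+K_{2,n,m})\le Cn^{-1-\alpha/2}m^{-1-\beta/2}$. That assertion itself holds only under an implicit comparability of the two mesh sizes (one needs $m^{1+\beta/2}\le Cn^{1+\alpha/2}$ and the symmetric inequality; even for $n=m$ it further requires $\alpha=\beta$), which matches how the lemma is used later ($n=m$ in the numerics) but is not among the stated hypotheses. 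So your instinct that $P$ and $Q$ are the crux is correct, yet the interpolation step would fail: you should either settle for the honest bound $O(n^{-2-\alpha})+O(m^{-2-\beta})+O(n^{-1-\alpha/2}m^{-1-\beta/2})$, or add an explicit hypothesis relating $n$ and $m$ before claiming the product rate.
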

\begin{proof}\text{}\\
From \eqref{gamma0}, we have
\begin{equation*}
|\gamma_{n,m,p,q}| \le |P_{n,p,q}| + |Q_{m,p,q}| + |R_{n,m,p,q}|.
\label{gamma1}
\end{equation*}
Using Lemma \ref{norm_h} gives
\begin{eqnarray*}
|P_{n,p,q}| &\le& \sum_{k=1}^\infty |h_{2kn \pm p,q}|\nonumber\\
 &\le& \|\theta\|\sum_{k=1}^\infty \lambda_{2kn \pm p,q}(T) + \sum_{k=1}^\infty\dfrac{E}{(2kn \pm p)^\alpha q^\beta a_1((2kn \pm p)^2+q^2)}\nonumber\\
&\le& \|\theta\|\sum_{k=1}^{\infty} e^{-A(T)[(2kn \pm p)^2 + q^2]} + \sum_{k=1}^\infty \dfrac{E}{a_1[(2kn \pm p)^{2+\alpha}+q^{2+\beta}]}.\nonumber
\end{eqnarray*}
This follows that
\begin{eqnarray*}
|P_{n,p,q}|&\le& \|\theta\| e^{-A(T)q^2}\sum_{k=1}^{\infty} e^{-A(T)(2kn \pm p)} + \sum_{k=1}^{\infty}\dfrac{E}{a_1(2kn \pm p)^{2+\alpha}}\nonumber\\
&\le& \|\theta\|\dfrac{e^{-A(T)(2n-p+q^2)} + e^{-A(T)(2n+p+q^2)}}{1-e^{-2nA(T)}} + \sum_{k=1}^{\infty}\dfrac{E}{a_1(2kn \pm n)^{2+\alpha}}\nonumber\\
&\le& \|\theta\|\dfrac{2e^{-A(T)(2n-p+q^2)}}{1-e^{-2nA(T)}} + \dfrac{E}{a_1 n^{2+\alpha}}\sum_{k=1}^{\infty}\dfrac{1}{(2k \pm 1)^{2+\alpha}}.\nonumber\\
\end{eqnarray*}
Since $A(T) > a_1T$ and $1-e^{-2nA(T)}\ge\dfrac{1}{2}$ as $n$ large, we obtain
\begin{equation}
\sum_{k=1}^\infty |h_{2kn \pm p,q}| \le 4e^{-a_1 T(2n-p+q^2)}\|\theta\| + \dfrac{2E K_\alpha}{a_1 n^{2+\alpha}} := K_{1,n,m},
\label{gamma1_1}
\end{equation}
where we use $K_\alpha:=\sum_{k=1}^{\infty}\dfrac{1}{(2k-1)^{2+\alpha}}<2,\forall\alpha>0$.
Similarly, we get
\begin{equation}
|Q_{m,p,q}| \le 4e^{-a_1T(2m-q+p^2)}\|\theta\| + \dfrac{2E K_\beta}{a_1m^{2+\beta}} := K_{2,n,m}.
\label{gamma1_2}
\end{equation}
Next, we find an upper bound for $|R_{n,m,p,q}|$. In fact, we have
\begin{eqnarray}
|R_{n,m,p,q}| &\le& \sum_{k=1}^{\infty}\sum_{l=1}^{\infty}|h_{2kn \pm p,2lm - q}| + \sum_{k=1}^{\infty}\sum_{l=1}^{\infty}|h_{2kn \pm p,2lm + q}|.\nonumber
\end{eqnarray}
Now we estimate the first term as follows
\begin{eqnarray}
\lefteqn{\sum_{k=1}^{\infty}\sum_{l=1}^{\infty}|h_{2kn \pm p,2lm - q}|} \nonumber\\
&\le& 
\sum_{k=1}^{\infty}\sum_{l=1}^{\infty} \|\theta\|\lambda_{2kn \pm p,2lm - q}(T) + \sum_{k=1}^{\infty}\sum_{l=1}^{\infty}\dfrac{E}{(2kn \pm p)^\alpha (2lm-q)^\beta a_1((2kn \pm p)^2+(2lm - q)^2)}\nonumber\\
&\le& \|\theta\|\sum_{k=1}^{\infty}\sum_{l=1}^{\infty}e^{-A(T)[(2kn \pm p)^2 + (2lm-q)^2]} + \sum_{k=1}^{\infty}\sum_{l=1}^{\infty}\dfrac{E}{a_1\left[(2kn \pm p)^{2+\alpha} + (2lm-q)^{2+\beta}\right]}\nonumber\\
&\le&\dfrac{\|\theta\|\left(e^{-A(T)(2n+2m-p-q)} + e^{-A(T)(2n+2m+p-q)}\right)}{\left[1-e^{-2nA(T)}\right]\left[1-e^{-2mA(T)}\right]} + \sum_{k=1}^{\infty}\sum_{l=1}^{\infty}\dfrac{E}{a_1\left[(2kn \pm n)^{2+\alpha} + 
(2lm-m)^{2+\beta}\right]}.\nonumber
\end{eqnarray}
Using the inequality $x+y\geq 2\sqrt{xy}$ ($x,y\geq 0$), we obtain
\begin{eqnarray}
\lefteqn{\sum_{k=1}^{\infty}\sum_{l=1}^{\infty}|h_{2kn \pm p,2lm - q}|} \nonumber\\
&\le&  \|\theta\|\dfrac{2e^{-A(T)(2n+2m-p-q)}}{\left[1-e^{-2nA(T)}\right]\left[1-e^{-2mA(T)}\right]} + \dfrac{E}{2a_1n^{1+\alpha/2}m^{1+\beta/2}}\sum_{k=1}^\infty \sum_{l=1}^\infty\dfrac{1}{(2k-1)^{1+\alpha/2}(2l-1)^{1+\beta/2}} \nonumber\\
&\le& 8e^{-a_1T(2n+2m-p-q)}\|\theta\| + \dfrac{E K_{\alpha,\beta}}{2a_1n^{1+\alpha/2}m^{1+\beta/2}},\nonumber
\end{eqnarray}
where $K_{\alpha,\beta}:=\sum_{k=1}^\infty \sum_{l=1}^\infty\dfrac{1}{(2k-1)^{1+\alpha/2}(2l-1)^{1+\beta/2}}<+\infty,\forall\alpha,\beta>0$. Similarly, we get
$$ \sum_{k=1}^{\infty}\sum_{l=1}^{\infty}|h_{2kn \pm p,2lm + q}|
\le 8e^{-a_1T(2n+2m-p-q)}\|\theta\| + \dfrac{E K_{\alpha,\beta}}{2a_1n^{1+\alpha/2}m^{1+\beta/2}}. $$
Therefore
\begin{equation*}
|R_{n,m,p,q}| \le 16e^{-a_1T(2n+2m-p-q)}\|\theta\| + \dfrac{E K_{\alpha,\beta}}{a_1n^{1+\alpha/2}m^{1+\beta/2}} := K_{3,n,m}.
\label{gamma1_3}
\end{equation*}
Noting that $ 2(K_{1,n,m} + K_{2,n,m})\le Cn^{-1-\alpha/2}m^{-1-\beta/2}$ and that $K_{3,n,m}\leq O(n^{-1-\alpha/2}m^{-1-\beta/2})$, we get the inequality \eqref{gamma1_0}.
\end{proof}
\noindent{\bf Remark.}
Writing almost verbatim (in fact, easier) the above proof, we can obtain an estimation of order $O(n^{-1-\alpha/2})$ for the
 discretization bias of one--dimensional Fourier coefficients. The order  
is better than the order $O(n^{-1})$ assumed in \cite{r3} and it can be applied for the Sobolev class of functions . Moreover, the idea can be generalized to the $n$--dimensional case.
\begin{lemma}
\label{eta}
Assume that $f(\cdot,\cdot,t) \in \mathscr{C}_{\alpha,\beta,E}$ and $\alpha,\beta>1$. With $\eta_{n,m,p,q}(t)$ defined by~\eqref{f_pq}, we obtain
\begin{equation}
|\eta_{n,m,p,q}(t)| \le C'\left(n^{-\alpha}+m^{-\beta}\right), 
\label{eta1}
\end{equation}
where  $2\le C' < \infty$.
\end{lemma}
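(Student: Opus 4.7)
The plan is to mirror the proof of Lemma \ref{gamma}, but working directly from the representation of $\eta_{n,m,p,q}(t)$ in Lemma \ref{lm_f_{pq}} rather than going through an intermediate formula of the $h_{p,q}$ type. The key observation that drives everything is that membership in the Sobolev class yields the pointwise bound
\[
|f_{p,q}(t)| \le \frac{E}{p^\alpha q^\beta},
\]
which follows immediately from the fact that each term in the defining sum of $\mathscr{C}_{\alpha,\beta,E}$ is dominated by $E^2$. Unlike the situation in Lemma \ref{gamma}, there is no $\|\theta\|\lambda_{p,q}(T)$-type contribution to carry, so the estimation is strictly easier.

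Starting from the decomposition $\eta_{n,m,p,q}(t) = P'_{n,p,q}(t) + Q'_{m,p,q}(t) + R'_{n,m,p,q}(t)$ given in \eqref{eta0}, I would treat each piece in turn. For $P'_{n,p,q}(t)$, I bound
\[
|P'_{n,p,q}(t)| \le \sum_{k=1}^\infty |f_{2kn \pm p, q}(t)| \le \frac{E}{q^\beta}\sum_{k=1}^\infty \frac{1}{(2kn \pm p)^\alpha}.
\]
Since $1 \le p \le n-1$, each term satisfies $2kn \pm p \ge (2k-1)n$, so the sum is bounded by $n^{-\alpha}\sum_{k=1}^\infty (2k-1)^{-\alpha}$, which converges because $\alpha>1$; using $q \ge 1$ this yields $|P'_{n,p,q}(t)| \le C\, n^{-\alpha}$. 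A symmetric argument in the other variable gives $|Q'_{m,p,q}(t)| \le C\, m^{-\beta}$.

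For the remainder term, the same double-indexed bound produces
\[
|R'_{n,m,p,q}(t)| \le \frac{2E}{n^\alpha m^\beta}\sum_{k=1}^\infty\sum_{l=1}^\infty \frac{1}{(2k-1)^\alpha (2l-1)^\beta},
\]
and since $\alpha,\beta>1$ both series converge, so $|R'_{n,m,p,q}(t)| \le C\, n^{-\alpha}m^{-\beta}$. This last contribution is dominated by either of the single-index bounds (since $n^{-\alpha}m^{-\beta} \le \min\{n^{-\alpha}, m^{-\beta}\}$ for $n,m \ge 1$), so absorbing everything into a single generic constant $C'$ (chosen $\ge 2$ to match the statement) gives the announced estimate
\[
|\eta_{n,m,p,q}(t)| \le C'\bigl(n^{-\alpha} + m^{-\beta}\bigr).
\]

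The main technical point (and the only real obstacle) is the handling of the $\pm$ in $2kn \pm p$: one must verify uniformly in $1 \le p \le n-1$ that $2kn \pm p \ge (2k-1)n \ge 1$, so that no term of the series blows up and the geometric-style comparison $(2kn \pm p)^{-\alpha} \le n^{-\alpha}(2k-1)^{-\alpha}$ is valid. Once this is in hand, the estimate is independent of $t$ because the pointwise bound on $|f_{p,q}(t)|$ holds for every $t \in [0,T]$ by assumption, and no temporal integration is required.
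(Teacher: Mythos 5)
Your proposal is correct and follows essentially the same route as the paper's proof: both start from the decomposition \eqref{eta0}, apply the pointwise bound $|f_{p,q}(t)|\le E p^{-\alpha}q^{-\beta}$ from the Sobolev class, use $2kn\pm p\ge(2k-1)n$ to compare with the convergent series $\sum_k(2k-1)^{-\alpha}$ (which is exactly where $\alpha,\beta>1$ is needed), and observe that the double-sum term $R'_{n,m,p,q}$ is dominated by the single-index terms. No substantive differences.
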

\begin{proof}\text{}\\
From \eqref{eta0}, the triangle inequality implies
\[|\eta_{n,m,p,q}(t)| \le |P'_{n,p,q}(t)| + |Q'_{m,p,q}(t)| + |R'_{n,m,p,q}(t)|.\]
Estimating directly the first term gives
\begin{eqnarray}
|P'_{n,p,q}(t)| &\le& \sum_{k=1}^{\infty}\left|f_{-p+2kn,q}(t)\right|+\left|f_{p+2kn,q}(t)\right| \le E\sum_{k=1}^{\infty}\left(\dfrac{1}{(2kn-p)^\alpha q^\beta}+\dfrac{1}{(p+2kn)^\alpha q^\beta}\right) \nonumber\\
&\le& \sum_{k=1}^\infty \dfrac{2E}{(2kn-p)^\alpha} \le 2\sum_{k=1}^{\infty}\dfrac{2E}{(2kn-n)^\alpha} \le \dfrac{C_\alpha}{n^{\alpha}}.\nonumber
\end{eqnarray}
Similarly, we also have
\begin{eqnarray}
|Q'_{m,p,q}(t)| &\le& \sum_{l=1}^{\infty}\left|f_{p,-q+2lm}(t)+f_{p,q+2lm}(t)\right|\le \dfrac{C_\beta}{m^{\beta}}\nonumber
\end{eqnarray}
and
\begin{eqnarray}
|R'_{n,m,p,q}(t)| &\le& \sum_{k=1}^{\infty}\sum_{l=1}^{\infty} \left|f_{2kn \pm p,2lm-q}(t)+f_{2kn \pm p,2lm+q}(t)\right|\nonumber\\
&\le& \dfrac{C_{\alpha,\beta}}{n^\alpha m^\beta}\nonumber
\end{eqnarray}
with $4\le C_{\alpha,\beta} < \infty$. Moreover, we easily see that the upper bound of $|R'_{n,m,p,q}(t)|$ is very smaller than the upper bounds of $|P'_{n,p,q}(t)|$ and $|Q'_{m,p,q}(t)|$ as $n,m$ tend to infinity. Hence, we get \eqref{eta1}.
\end{proof}
To prepare for the proof of the main result, we need
\begin{lemma}\label{appro_int1}
Let $L > 1$ and $k > 0$, then
\begin{equation}
\label{appx1}
\int_{1}^{L} e^{ku^2} \mathrm{d}u \le \frac{1}{Lk}e^{L^2k}.
\end{equation}
\end{lemma}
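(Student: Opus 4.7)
The plan is to reduce the ``Gaussian'' integrand $e^{ku^2}$ to a purely exponential one on $[1,L]$ by exploiting the fact that $u$ is bounded above by $L$ on the interval of integration, and then evaluate the resulting elementary integral. The key observation is that for $u \in [1,L]$, we have $u \le L$, and since $u \ge 1 > 0$, multiplying both sides by $u$ preserves the inequality to give
\[
u^2 \le Lu.
\]
Multiplying by $k>0$ and exponentiating yields the pointwise bound $e^{ku^2}\le e^{kLu}$ on $[1,L]$.

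Next, I would integrate this inequality term by term. Since the right-hand side is a pure exponential in $u$, we have
\[
\int_1^L e^{kLu}\,\mathrm{d}u \;=\; \frac{1}{kL}\bigl(e^{kL^2}-e^{kL}\bigr) \;\le\; \frac{1}{kL}\,e^{kL^2},
\]
where in the final step we simply drop the (positive) subtracted term $e^{kL}$. Combining this with the pointwise bound gives exactly the claim \eqref{appx1}.

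There is essentially no obstacle here; the only ``trick'' is recognizing that on a bounded interval $[1,L]$ one can linearize the quadratic exponent via $u^2\le Lu$, which is what converts the otherwise non-elementary $\int e^{ku^2}\,\mathrm{d}u$ into an elementary integral with the desired Gaussian-type decay factor $1/(kL)$ in front of $e^{kL^2}$. Note that a sharper constant (of order $1/(2kL)$) would follow from integration by parts of $\int u\cdot e^{ku^2}/u\,\mathrm{d}u$, but the present bound with constant $1$ suffices for the forthcoming convergence-rate estimates and keeps the algebra clean.
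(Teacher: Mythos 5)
Your proof is correct. The pointwise bound $u^2\le Lu$ on $[1,L]$ is valid, the resulting exponential integral evaluates exactly as you state, and dropping $e^{kL}$ gives \eqref{appx1}. The paper reaches the same conclusion by a more circuitous route: it first substitutes $s=u/L$ and enlarges the domain to $[0,1]$, then substitutes $v=L^2k(1-s)$ and bounds the exponent by $L^2k\bigl((1-v/(L^2k))^2-1\bigr)\le -v$, so that the integral is dominated by $\frac{1}{Lk}e^{L^2k}\int_0^{L^2k}e^{-v}\,\mathrm{d}v$. Unwinding those substitutions, the paper's key inequality is precisely $s^2\le s$ on $[0,1]$, i.e.\ your $u^2\le Lu$; so the two arguments rest on the identical linearization of the quadratic exponent at the right endpoint, with yours simply avoiding the two changes of variables and the comparison with $\int e^{-v}\,\mathrm{d}v$. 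Your version is shorter and cleaner; the paper's version makes the ``effective width'' $1/(Lk)$ of the integrand near $u=L$ slightly more visible. (Your closing aside about improving the constant to order $1/(2kL)$ via integration by parts is plausible asymptotically but not justified as stated, since $\frac{1}{2ku}$ cannot be bounded by $\frac{1}{2kL}$ on all of $[1,L]$; as you note, it is not needed.)
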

\begin{proof}\text{}\\
Putting $s = \dfrac{u}{L}$, we have
\[\int_{1}^{L}e^{ku^2} \mathrm{d}u = L\int_{\frac{1}{L}}^{1}e^{L^2k s^2} \mathrm{d}s \le L\int_{0}^{1}e^{L^2k s^2} \mathrm{d}s.\]
Then, transforming variable $v=L^2k(1-s)$ gives
\begin{eqnarray}
L\int_{0}^{1}e^{L^2k s^2} \mathrm{d}s = \dfrac{1}{Lk}\int_{0}^{L^2k}e^{L^2k\left(1 - \frac{v}{L^2k}\right)^2} \mathrm{d} v =  \dfrac{1}{Lk}e^{L^2k}\int_0^{L^2k}e^{L^2k\left(\left(1 - \frac{v}{L^2k}\right)^2 - 1\right)} \mathrm{d} v.\nonumber
\end{eqnarray}
Since
\[L^2k\left(\left(1 - \frac{v}{L^2k}\right)^2 - 1\right) = v\dfrac{L^2k\left(\left(1 - \frac{v}{L^2k}\right)^2 - 1\right)}{v} \leq - v,\]
we have
\begin{eqnarray}
\int_{1}^{L} e^{ku^2} \mathrm{d}u &\le&  \dfrac{1}{Lk}e^{L^2k}\int_0^{L^2k}e^{-v} \mathrm{d} v \le  \dfrac{1}{Lk}e^{L^2k} \left(1 - e^{-L^2k}\right) \le  \dfrac{1}{Lk}e^{L^2k}. \nonumber
\end{eqnarray}
Therefore, \eqref{appx1} holds.
\end{proof}
Finally, we are ready to state and prove the main theorem of our paper.
\begin{theorem}\label{thr2}
Let $E > 0$, $\alpha,\beta > 1$, $0<\omega_1,\omega_2<2$ and
$h \in C^1(\overline{\Omega}), \quad f \in C([0,T];C^1(\overline{\Omega})\cap \mathscr{C}_{\alpha,\beta,E})$.
Assume that the system \eqref{prob2s}--\eqref{cd1} has a (unique) solution
$u \in C^1([0,1];L^2(\Omega)) \cap C([0,T];H^2(\Omega))$.
Choose
\begin{equation*}\label{choos_M,N}
N = \left\lfloor\dfrac{(\omega_1\log n)^{1/2}}{2\sqrt{A(T)}}\right\rfloor \text{ and } M = \left\lfloor\dfrac{(\omega_2\log m)^{1/2}}{2\sqrt{A(T)}}\right\rfloor,
\end{equation*}
where  $\lfloor x\rfloor$ is the greatest integer $\leq x$.
For $\hat{\theta}_{n,m,N,M}(x,y)$ defined in \eqref{theta2}, $\theta(x,y)=u(x,y,0)$,  we have
\begin{equation*}\label{nor1}
\mathbb{E}\left\|\hat{\theta}_{n,m,N,M} - \theta \right\|^2 \le C_0\left(\left(\dfrac{\omega_1
\log n}{4A(T)}\right)^{-\alpha} + \left(\dfrac{\omega_2
\log m}{4A(T)}\right)^{-\beta}\right).
\end{equation*}
Here, the positive constant $C_0$ is  independent of $n,m$.
\end{theorem}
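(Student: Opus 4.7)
The plan is to begin from the identity in Lemma \ref{norm_theta0} and pass to expectation, which produces a classical bias--variance decomposition: a stochastic part driven by $\vartheta\xi_{ij}$ and $\sigma_{ij}\epsilon_{ij}$, a deterministic discretization bias driven by $\gamma_{n,m,p,q}$ and $\eta_{n,m,p,q}$, and a truncation tail over frequencies $p>N$ or $q>M$. Each piece is bounded separately, and the choice of $N,M$ is precisely what balances them. Because $\epsilon_{ij}$ and $\xi_{ij}(t)$ are centered and mutually independent, all cross--expectations mixing a stochastic with a deterministic factor vanish when $\mathbb{E}\|\hat\theta_{n,m,N,M}-\theta\|^2$ is expanded.

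For the $\epsilon$--part, the orthogonality identity of Lemma \ref{lm1} (with $r=p$, $s=q$) together with $\sigma_{ij}\le V_{\max}$ produces an upper bound of order $\lambda^{-2}_{p,q}(T)/(nm)$. For the Brownian--motion part, applying Cauchy--Schwarz in $\tau$ with $\mathbb{E}\xi^2_{ij}(\tau)=\tau$ and the same orthogonality yields a bound of order $\vartheta^2 T^3\lambda^{-2}_{p,q}(T)/(nm)$. Summing these over $p\le N$, $q\le M$ and factoring the resulting double sum, Lemma \ref{appro_int1} applied with $k=2A(T)$ gives $\sum_{p=1}^N e^{2A(T)p^2}\lesssim e^{2A(T)N^2}/(2NA(T))$, and analogously in $q$. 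Substituting $N=\lfloor(\omega_1\log n)^{1/2}/(2\sqrt{A(T)})\rfloor$ turns $e^{2A(T)N^2}$ into $n^{\omega_1/2}$. Since $\omega_1,\omega_2<2$, the residual factors $n^{\omega_1/2-1}$ and $m^{\omega_2/2-1}$ decay polynomially in $n,m$, so the full variance contribution is $o((\log n)^{-\alpha}+(\log m)^{-\beta})$.

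For the deterministic bias I combine Lemmas \ref{gamma} and \ref{eta} with the pointwise bound $\lambda^{-1}_{p,q}(\tau)\le\lambda^{-1}_{p,q}(T)\le e^{A(T)(N^2+M^2)}=n^{\omega_1/4}m^{\omega_2/4}$ valid for $p\le N, q\le M$. Since $|\gamma_{n,m,p,q}|=O(n^{-1-\alpha/2}m^{-1-\beta/2})$ and $|\eta_{n,m,p,q}(\tau)|=O(n^{-\alpha}+m^{-\beta})$, the $\lambda^{-1}$ blow--up is absorbed and, after the $p,q$--summation, these residuals remain polynomially small in $n,m$ and are thus negligible. Finally, the truncation tail is handled using the Sobolev hypothesis $\theta\in\mathscr{C}_{\alpha,\beta,E}$ (which the proof will inherit from the stated regularity on $u,h,f$ via the identity \eqref{h_pq}):
\[
\sum_{p>N}\sum_{q\ge 1}\theta_{p,q}^2 \;\le\; \frac{1}{N^{2\alpha}}\sum_{p,q}p^{2\alpha}q^{2\beta}\theta_{p,q}^2 \;\le\; \frac{E^2}{N^{2\alpha}},
\]
together with the symmetric estimate in $q$. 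With $N^{2\alpha}\sim(\omega_1\log n/(4A(T)))^{\alpha}$, this tail matches the announced logarithmic rate exactly.

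Collecting the three pieces, the truncation tail dominates and yields the stated bound with $C_0$ independent of $n,m$. The main obstacle I expect is the sharp control of the exponential double sum $\sum_{p\le N,\,q\le M} e^{2A(T)(p^2+q^2)}$: Lemma \ref{appro_int1} has to be used carefully enough that, after the $1/(nm)$ cancellation, the surviving quantity is strictly smaller---by a polynomial factor in $n,m$---than the logarithmic truncation rate, and this happens precisely under the hypothesis $\omega_1,\omega_2<2$. A secondary technical point is to justify $\theta\in\mathscr{C}_{\alpha,\beta,E}$ (and not merely $f$) from the stated hypotheses, so that the truncation tail can be estimated as above.
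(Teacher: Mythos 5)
Your proposal is correct and follows essentially the same route as the paper: the identity of Lemma \ref{norm_theta0} split into a stochastic variance term (controlled via the orthogonality of Lemma \ref{lm1}, the Brownian covariance, and the exponential-sum bound of Lemma \ref{appro_int1}, with $\omega_1,\omega_2<2$ making it polynomially small), a discretization bias term controlled by Lemmas \ref{gamma} and \ref{eta}, and a dominant truncation tail bounded by $E^2(N^{-2\alpha}+M^{-2\beta})$ under $\theta\in\mathscr{C}_{\alpha,\beta,E}$. The one caveat you raise --- that $\theta\in\mathscr{C}_{\alpha,\beta,E}$ is needed for the tail but is not among the theorem's explicit hypotheses --- is equally present in the paper's own proof, which simply inherits it from the hypotheses of Lemma \ref{norm_theta0}.
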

\begin{proof}\text{}\\
According Lemma~\ref{norm_theta0}, we have
\begin{equation*}
\mathbb{E}\left\|\hat{\theta}_{n,m,N,M} - \theta \right\|^2 \le  \mathbb{E}I_1 + I_2,
\label{norm_theta1}
\end{equation*}
where
\begin{eqnarray}
I_1 &=& \dfrac{12\pi^4}{n^2m^2}\sum_{p=1}^N \sum_{q=1}^M\Bigg[\left(\sum_{i=1}^n\sum_{j=1}^m\left(\lambda^{-1}_{p,q}(A(T))\sigma_{ij}\epsilon_{ij} - \int_0^T\lambda^{-1}_{p,q}(\tau)\vartheta\xi_{ij}(\tau)\mathrm{d}\tau\right)\phi_{p,q}(x_i,y_j)\right)^2\nonumber\\ && + \: \left(\int_0^T\lambda^{-1}_{p,q}(\tau)\eta_{n,m,p,q}(\tau)\mathrm{d}\tau\right)^2 + \gamma_{n,m,p,q}^2\lambda_{p,q}^{-2}(A(T))\Bigg],\nonumber\\
I_2 &=& 4\left(\sum_{p=N+1}^\infty \sum_{q=1}^M \theta^2_{p,q} + \sum_{p=1}^N \sum_{q=M+1}^\infty \theta_{p,q}^2 + \sum_{p=N+1}^\infty \sum_{q=M+1}^\infty \theta_{p,q}^2\right).\nonumber
\end{eqnarray}
First, we consider $I_1$. We have
\[I_1 = \dfrac{12\pi^4}{n^2m^2}\left(I_{1,1} + I_{1,2} + I_{1,3}\right).\]
We get
\begin{eqnarray}
I_{1,1}&=&\sum_{p=1}^N \sum_{q=1}^M\left(\sum_{i=1}^n\sum_{j=1}^m\left(\lambda^{-1}_{p,q}(A(T))\sigma_{ij}\epsilon_{ij} - \int_0^T\lambda^{-1}_{p,q}(\tau)\vartheta\xi_{ij}(\tau)\mathrm{d}\tau\right)\phi_{p,q}(x_i,y_j)\right)^2\nonumber\\
&\le& 2\sum_{p=1}^N \sum_{q=1}^M\Bigg(\lambda^{-2}_{p,q}(A(T))\left[\sum_{i=1}^n\sum_{j=1}^m\phi_{p,q}(x_i,y_j)\sigma_{ij}\epsilon_{ij}\right]^2 \nonumber\\ && +\: \left[\sum_{i=1}^n\sum_{j=1}^m\phi_{p,q}(x_i,y_j)\int_0^T\lambda^{-1}_{p,q}(\tau)\vartheta\xi_{ij}(\tau) \mathrm{d}\tau\right]^2\Bigg).\nonumber
\end{eqnarray}
From the Brownian motion properties, we known that $\mathbb{E}[\xi_{ij}(t)\xi_{kl}(t)] = 0$ for $k \ne i, l \ne j$ and $\mathbb{E}\xi_{ij}^2(t) = t$. By the H\"{o}lder inequality, we obtain 
\begin{eqnarray}
\mathbb{E}(I_{1,1}) &\le& 2\sum_{p=1}^N \sum_{q=1}^M\left(\dfrac{nm}{\pi^2}V_{\text{max}}\lambda^{-2}_{p,q}(A(T)) + \sum_{i=1}^n\sum_{j=1}^m\phi^2_{p,q}(x_i,y_j)\int_0^T\lambda^{-2}_{p,q}(A(\tau))\mathrm{d}\tau\int_{0}^{T}\vartheta^2\mathbb{E}\xi^2_{ij}(\tau) \mathrm{d}\tau\right)\nonumber\\
&\le& 2\sum_{p=1}^N \sum_{q=1}^M\left(\dfrac{nm}{\pi^2}V_{\text{max}}\lambda^{-2}_{p,q}(A(T)) + \dfrac{\vartheta^2T^2}{2}\int_0^T\lambda^{-2}_{p,q}(A(\tau))\mathrm{d}\tau\sum_{i=1}^n\sum_{j=1}^m\phi^2_{p,q}(x_i,y_j)\right)\nonumber\\
&\le& 2\sum_{p=1}^N \sum_{q=1}^M\left(\dfrac{nm}{\pi^2}V_{\text{max}}\lambda^{-2}_{p,q}(A(T)) + \dfrac{\vartheta^2T^2 nm}{2\pi^2}\int_0^T\lambda^{-2}_{p,q}(A(T))\mathrm{d}\tau \right) \nonumber\\
&\le& \dfrac{2nm}{\pi^2}\left(V_{\text{max}} + \dfrac{\vartheta^2T^3}{2}\right)\sum_{p=1}^N \sum_{q=1}^Me^{2A(T)(p^2+q^2)}.\nonumber
\end{eqnarray}
According Lemma \ref{appro_int1}, we have
\begin{eqnarray}
\mathbb{E}(I_{1,1}) &\le& \dfrac{2nm}{\pi^2}\left(V_{\text{max}} + \dfrac{\vartheta^2T^3}{2}\right)\int_{1}^{N+1}\int_{1}^{M+1}e^{2A(T)(s^2+r^2)}\mathrm{d}r\mathrm{d}s \nonumber\\
&\le& \dfrac{2nm}{\pi^2}\left(V_{\text{max}} + \dfrac{\vartheta^2T^3}{2}\right)\int_{1}^{N+1}e^{2A(T)s^2}\mathrm{d}s\int_{1}^{M+1}e^{2A(T)r^2}\mathrm{d}r.\nonumber
\end{eqnarray}
Noting that $e^{N^2}\leq n^{\frac{\omega_1}{4A(T)}}, e^{M^2}\leq m^{\frac{\omega_2}{4A(T)}}$, we obtain
\begin{eqnarray}
\mathbb{E}(I_{1,1}) &\le& \dfrac{nm\left(V_{\text{max}} + \dfrac{\vartheta^2T^3}{2}\right)}{2A^2(T)(N+1)(M+1)}e^{2A(T)[(N+1)^2+(M+1)^2]}\nonumber\\
&\le& \dfrac{n^{\frac{\omega_1}{2}+1}m^{\frac{\omega_2}{2}+1}\left(V_{\text{max}} + \dfrac{\vartheta^2T^3}{2}\right)}{2A^2(T)(N+1)(M+1)}e^{2A(T)[2N+2M]}.\nonumber
\end{eqnarray}
Putting $\eta_{n,m}=\max\{|\eta_{n,m,p,q}|:\ p=\overline{1,N}, q=\overline{1,M}\}$, we obtain directly
\begin{eqnarray}
I_{1,2} &\le& \sum_{p=1}^N \sum_{q=1}^M\eta_{n,m}^2\left(\int_0^T\lambda^{-1}_{p,q}(\tau)\mathrm{d}\tau\right)^2 \nonumber\\
&\le& \eta_{n,m}^2\sum_{p=1}^N \sum_{q=1}^M\left[\int_0^T e^{(p^2+q^2)\int_0^\tau a(s)\mathrm{d}s} \mathrm{d}\tau\right]^2 \nonumber\\
&\le& \eta_{n,m}^2\sum_{p=1}^N \sum_{q=1}^M\left(\int_0^T e^{a_2\tau(p^2+q^2)} \mathrm{d}\tau\right)^2. \nonumber
\end{eqnarray}
Hence, it follows from Lemma \ref{eta} that
\begin{eqnarray}
I_{1,2}&\le& \eta_{n,m}^2\sum_{p=1}^N \sum_{q=1}^M\dfrac{e^{2a_2T[p^2+q^2]}}{a_2^2(p^2+q^2)^2} \le \dfrac{e^{2a_2T[(N+1)^2 + (M+1)^2]}}{2a_2^3T(N+1)(M+1)}\eta_{n,m}^2\nonumber\\
&\le& C'^2\left(n^{-\alpha}+m^{-\beta}\right)^2 \dfrac{2n^{\frac{\omega_1}{2}}m^{\frac{\omega_2}{2}}}{a_2^3T(N+1)(M+1)}e^{2A(T)[2N+2M]}.\nonumber
\end{eqnarray}
Now, we find an upper bound of $I_{1,3}$. Putting $\gamma_{n,m}=\max\{|\gamma_{n,m,p,q}|:\ p\in\overline{1,N},
q\in\overline{1,M}\}$  and using Lemma \ref{gamma} we have
\begin{eqnarray}
I_{1,3} &\le& \gamma_{n,m}^2 \lambda_{p,q}^{-2}(T) \le 8\sum_{p=1}^N \sum_{q=1}^M(K_{1,n,m}^2 + K_{2,n,m}^2)\lambda_{p,q}^{-2}(T), \nonumber
\end{eqnarray}
where $K_{1,n,m}, K_{2,n,m}$ are defined in \eqref{gamma1_1}, \eqref{gamma1_2}.\\
We get
\begin{eqnarray}
\sum_{p=1}^N \sum_{q=1}^M K_{1,n,m}^2\lambda_{p,q}^{-2}(T) &\le& \sum_{p=1}^N \sum_{q=1}^M \left[4e^{-a_1T(2n-p+q^2)}\|\theta\| + \dfrac{2E}{a_1n^{2+\alpha}}\right]^2\lambda_{p,q}^{-2}(T)\nonumber\\
&\le& \dfrac{8E^2}{a_1^2n^{4+2\alpha}} \sum_{p=1}^N \sum_{q=1}^M\dfrac{\lambda_{p,q}^{-2}(T)}{p^{2+2\alpha}}+ 32e^{-4na_1T}\|\theta\|^2\sum_{p=1}^N \sum_{q=1}^Me^{-2a_1T(q^2 - p)} \lambda_{p,q}^{-2}(T)\nonumber\\
&\le& \dfrac{8E^2}{a_1^2n^{4+2\alpha}}\sum_{p=1}^N \sum_{q=1}^M e^{2A(T)[p^2+q^2]}\nonumber\\ && + \: 32e^{-4na_1T}\|\theta\|^2\sum_{p=1}^N \sum_{q=1}^Me^{2(A(T)p^2 + a_1Tp)}e^{2q^2(A(T)-a_1T)}\nonumber\\
&\le& \dfrac{8E^2 e^{2A(T)[(N+1)^2 + (M+1)^2]}}{a_1^2n^{4+2\alpha} A(T)(N+1)(M+1)}\nonumber\\ && + \: 32e^{-4na_1T}NMe^{2A(T)(N^2+M^2)}e^{2a_1T(N - M^2)}\|\theta\|^2\nonumber\\
&\le& \dfrac{4E^2 n^{\frac{\omega_1}{2} - 4 - \alpha}m^{\frac{\omega_2}{2}}}{a_1^2A(T)(N+1)(M+1)} + 64e^{-4na_1T}n^{\frac{\omega_1}{2}}m^{\frac{\omega_2}{2}}NM\|\theta\|^2. \nonumber
\end{eqnarray}
Similarly, we obtain
\begin{eqnarray}
\sum_{p=1}^N \sum_{q=1}^M K_{2,n,m}^2\lambda_{p,q}^{-2}(T) 
&\le& \dfrac{4E^2 m^{\frac{\omega_2}{2} - 4 - \beta}n^{\frac{\omega_1}{2}}}{a_1^2A(T)(N+1)(M+1)} + 64e^{-4ma_1T}n^{\frac{\omega_1}{2}}m^{\frac{\omega_2}{2}}NM\|\theta\|^2.\nonumber
\end{eqnarray}
Hence,
\begin{eqnarray}
I_{1,3} &\le& 4n^{\frac{\omega_1}{2}}m^{\frac{\omega_2}{2}} \Bigg[\dfrac{E^2\left(n^{- 4 - \alpha} + m^{- 4 - \beta}\right)}{a_1^2A(T)(N+1)(M+1)} + 16MN\left(e^{-4ma_1T} + e^{-4na_1T}\right)\|\theta\|^2\Bigg].\nonumber
\end{eqnarray}
Therefore, we get
\begin{eqnarray}
\mathbb{E}I_1 &\le& e^{2A(T)[2N+2M]}\left[\dfrac{6\pi^2 n^{\frac{\omega_1}{2}-1}m^{\frac{\omega_2}{2}-1}\left(V_{\text{max}} + {\vartheta^2 T^3}/{2}\right)}{2A^2(T)(N+1)(M+1)} +\right.\\
& &\left. C'^2\left(n^{-\alpha}+m^{-\beta}\right)^2\dfrac{24n^{\frac{\omega_1}{2}-2}m^{\frac{\omega_2}{2}-2}}{a_2^3T(N+1)(M+1)}\right]+\nonumber\\ &&  \:  48\pi^4n^{\frac{\omega_1}{2}-2}m^{\frac{\omega_2}{2}-2}\Bigg[\dfrac{E^2\left(n^{- 4 - \alpha} + m^{- 4 - \beta}\right)}{a_1^2A(T)(N+1)(M+1)} + 16MN\left(e^{-4ma_1T} + e^{-4na_1T}\right)\|\theta\|^2\Bigg]\nonumber\\
&=& C\Delta_{n,m,\omega_1,\omega_2},
\label{E_I1}
\end{eqnarray}
where 
\begin{equation*}
 \Delta_{n,m,\omega_1,\omega_2}=4E^2\left[\left(\dfrac{\omega_1
\log n}{4A(T)}\right)^{-\alpha} + \left(\dfrac{\omega_2
\log m}{4A(T)}\right)^{-\beta} \right].
\label{E_I2}
\end{equation*}
To finish the proof of this theorem, we find an upper bound for $I_2$. In fact, we have
\begin{eqnarray}
I_2 &\le& 4\left(\sum_{p=N+1}^\infty \sum_{q=1}^M \left|A^2_{p,q}\right| + \sum_{p=1}^N \sum_{q=M+1}^\infty \left|\theta_{p,q}^2\right| + \sum_{p=N+1}^\infty \sum_{q=M+1}^\infty \left|\theta_{p,q}^2\right|\right)\nonumber\\
&\le& 4\Bigg(\sum_{p=N+1}^\infty \sum_{q=1}^M p^{-2\alpha}q^{-2\beta}\left|\langle p^\alpha q^\beta \theta, \phi_{p,q}\rangle\right|^2 + \sum_{p=1}^N \sum_{q=M+1}^\infty p^{-2\alpha}q^{-2\beta}\left|\langle p^\alpha q^\beta \theta, \phi_{p,q}\rangle\right|^2 \nonumber\\ && + \: \sum_{p=N+1}^\infty \sum_{q=M+1}^\infty p^{-2\alpha}q^{-2\beta}\left|\langle p^\alpha q^\beta \theta, \phi_{p,q}\rangle\right|^2\Bigg)\nonumber\\
&\le& 4E^2\left(N^{-2\alpha}+M^{-2\beta} + N^{-2\alpha}M^{-2\beta}\right)\nonumber\\
&\le & 2\Delta_{n,m,\omega_1,\omega_2}.
\end{eqnarray}
Therefore there exists a positive number $C_0$ independent of $n,m,N,M$ such that
\[\mathbb{E}\left\|\hat{\theta}_{n,m,N,M} - \theta \right\|^2 \le C_0\left[\left(\dfrac{\omega_1
\log n}{4A(T)}\right)^{-\alpha} + \left(\dfrac{\omega_2
\log m}{4A(T)}\right)^{-\beta}\right].\]
\end{proof}
\section{Numerical Results}
We illustrate the theoretical results by concrete examples. To this end, we first describe a 
plan for computation. Let $\Omega = (0,\pi) \times (0,\pi)$, $T=1$ and 
\begin{equation}\label{exam}
\left\{\begin{array}{l l l}
  	u_t - a(t)\Delta u  & = f(x,y,t), & \Omega\times (0,1),\\
  	u(x,y,t)\big|_{\partial \Omega} & = 0, & 0 \le t \le 1,\\
  	u(x,y,1) & = h(x,y), & (x,y) \in\overline{\Omega},
\end{array}\right.\nonumber
\end{equation}
where the functions $f(x,y,t),h(x,y)$ are measured and the function $a:[0,1]\to\mathbb{R}$  is known.
 
We shall simulate the data for heat source term and final condition, respectively. In fact, at each point $(x_i,y_j)=\left(\frac{\pi (2i-1)}{2n},\frac{\pi (2j-1)}{2m}\right)$, $i=\overline{1,n}, j=\overline{1,m}$, using two subroutines in FORTRAN programs of John Barhardt (see \cite{r22}) and of Marsaglia G., Tsang W. W. (see \cite{r23}), we make noises  the heat source by   $\vartheta\xi_{ij}(t)$ and  the final data  by  $\sigma_{ij}\epsilon_{ij}$ where  $\xi_{ij}(t)$ are the
normal Brownian motions and $\epsilon_{ij}$ are the standard normal random variables. Choosing $\sigma_{ij}^2 = \sigma^2 = \vartheta = 10^{-1}$ and $10^{-2}$, we have two following regression models
\begin{eqnarray}
d_{ij} &=& h(x(i),y(j)) + \sigma\epsilon_{ij}, \qquad \epsilon_{ij} \stackrel{\text{i.i.d}}{\sim}\mathcal{N}(0,1),\nonumber\\
g_{ij}(t) &=& f(x(i),y(j)) + \vartheta\xi_{ij}.\nonumber
\end{eqnarray}

Now, we choose some numerical methods to compare errors. The first method is the trigonometric nonparametric regression (truncated method for short) which is considered in the present paper. The second method is the quasi--boundary value (QBV) regularization. The third method is based on the classical solution (CS for short) of the backward problem. 

For the mentioned function $a$, we use the method Legendre--Gauss quadrature with the roots $x_i$ of the Legendre polynomials $P_{512}(x),x\in[-1,1]$ to calculate
\[A_{GL}=\int_{0}^{1}a(s)\mathrm{d}s=\frac{1}{2}\sum_{n=1}^{512}w_ia\left(\frac{x_i}{2}+\frac{1}{2}\right)\]
where 
$$w_i=\frac{2}{(1+x_i^2)\left[P'_{512}(x_i)\right]^2}.$$

The first method is the truncated one which is considered in the present paper. In the method, we have to set up the values of $N,M$.
With the quantity $A_{GL}$, we can obtain the values of $N,M$ from $n,m$ and $\omega_1 = \omega_2 = 1$ by the following formula
\[N = \left\lfloor\dfrac{(\log n)^{1/2}}{A_{GL}}\right\rfloor \text{ and } M = \left\lfloor\dfrac{(\log m)^{1/2}}{A_{GL}}\right\rfloor.
\]
In each case of variance $\sigma_{ij}^2 = \sigma^2$, we compute $30$ times. To calculate the error between the exact solution and the estimator, we use the root mean squared error (RMSE) as follows
\[\mathrm{RMSE}(\hat{\theta};\theta) = \sqrt{\frac{1}{nm}\sum_{i=1}^{n}\sum_{j=1}^{m}\left(\hat{\theta}(x_i,y_j) - \theta(x_i,y_j)\right)^2}.\]
Then, we find the average of $\mathrm{RMSE}(\hat{\theta};\theta)$ in 30 runs order.\\ 

The second method is the quasi--boundary value (QBV) regularization with the approximation of the initial data
\[\theta_{QBV}(x,y)=\sum_{p=1}^{\infty}\sum_{q=1}^{\infty}\left(\frac{\hat{h}_{p,q}}{\epsilon(p^2+q^2)+\lambda_{p,q}(T)}-\int_{0}^{T}\frac{\lambda^{-1}_{p,q}(\tau)\lambda_{p,q}(T)}{\epsilon(p^2+q^2)+\lambda_{p,q}(T)}\hat{f}_{p,q}(\tau)\text{d}\tau\right)\phi_{p,q}(x,y).\]
The method is chosen since it is quite common and the stability magnitude of the regularization operator is of order $O(\epsilon^{-1})$ (see \cite{show}). As mentioned, in the QBV method, we do not have explicit stopping indices. So, we only calculate 
 with $p,q=\overline{1,20}; \epsilon=\sigma^2$ and use the formula
\[\theta_{QBV}(x,y)\approx\sum_{p=1}^{20}\sum_{q=1}^{20}\left(\frac{\hat{h}_{p,q}}{\epsilon(p^2+q^2)+\lambda_{p,q}(T)}-\int_{0}^{T}\frac{\lambda^{-1}_{p,q}(\tau)\lambda_{p,q}(T)}{\epsilon(p^2+q^2)+\lambda_{p,q}(T)}\hat{f}_{p,q}(\tau)\text{d}\tau\right)\phi_{p,q}(x,y).\]

Finally, we consider a numerical result for the classical solution (CS for short). As the second method, we use the approximation formula
\[\theta_{CS}(x,y)\approx\sum_{p=1}^{20}\sum_{q=1}^{20}\left(\hat{h}_{p,q}\lambda^{-1}_{p,q}(T)-\int_{0}^{T}\lambda^{-1}(\tau)\hat{f}_{p,q}(\tau)\text{d}\tau\right)\phi_{p,q}(x,y).\] 
We shall illustrate the discussed plan by two examples. In Example 1, we consider the problem with
an exact initial datum $\theta$ having a finite Fourier expansion. In Example 2, we compute with the function $\theta$ having an infinite Fourier expansion.

In the examples, to calculate integrals depended on the time variable $t$ in approximation formulae, we use the generalized Simpson approximation with 101 equidistant points $0=t_0<t_1<\dots<t_{101}=1$
\[\int_0^1 \nu(\tau) \mathrm{d}\tau=\frac{1}{100}\left[\frac{3}{8}\nu(t_0)+\frac{7}{6}\nu(t_1)+\frac{23}{24}\nu(t_2)+\sum_{k=3}^{n-3}\nu(t_k)+\frac{23}{24}\nu(t_{99}+\frac{7}{6}\nu(t_{100})+\frac{3}{8}\nu(t_{101})\right]\] where $\nu(\tau)=\lambda_{p,q}^{-1}(\tau)\hat{f}_{p,q}(\tau)$.\\
 
%##################################################################
\textit{Example 1.} With $a(t)=2-t$, we can see that $1=a_1\le a(t)\le a_2=2$. We have $A_{GL}=1.5$.
Assuming $f(x,y,t) = 2(t^3 - 2t^2 - 6t + 10)\sin(x)\sin(y)$ and $h(x,y) = 4\sin(x)\sin(y)$. The exact value of $u(x,y,0)$ is 
\[\theta(x,y) = 5\sin(x)\sin(y)\]
which has a finite Fourier expansion.

Figure \ref{fg:data} and Figure \ref{fg:data_f} present surfaces of the data and their contours without and within noises for the final condition and the source term. They are drawn in case $\sigma^2 = 10^{-1}$, $n = m = 81$ and at the time $t = 0.5$, w.r.t. \\
\begin{figure}[btbp]
\begin{center}
\begin{tabular}{@{}c@{}c@{}}
  \multicolumn{2}{c}{\footnotesize The data set of the final temperature}\\[-0.2em]
   \includegraphics[width=8cm, height=6cm]{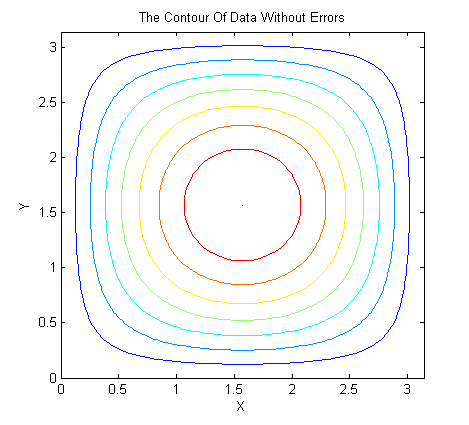}&
   \includegraphics[width=8cm, height=6cm]{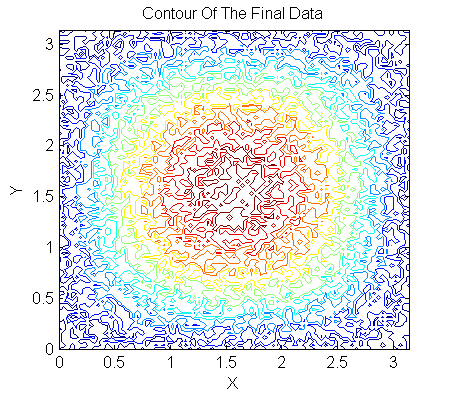}\\[-0.1em]
  \footnotesize Without Noises & \footnotesize Within Noises\\
\end{tabular}
\caption{The Contour of Two Data Set for Final Temperature.}
\label{fg:data}
\end{center}
\end{figure}
According to the figures, we can see the non--smoothness of two surfaces data in case of random noise. In fact, from the contour plot within noise of the final data, we also see that the measured data is very chaotic.
\begin{figure}[htbp]
\begin{center}
\begin{tabular}{@{}c@{}c@{}}
  \multicolumn{2}{c}{\footnotesize The data set of the source term at $t=0.5$}\\[-0.2em]
  \includegraphics[width=8cm, height=6cm]{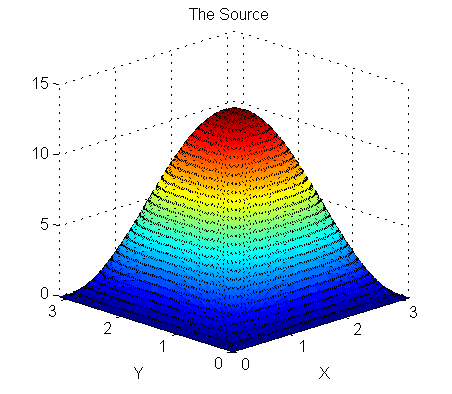}&
  \includegraphics[width=8cm, height=6cm]{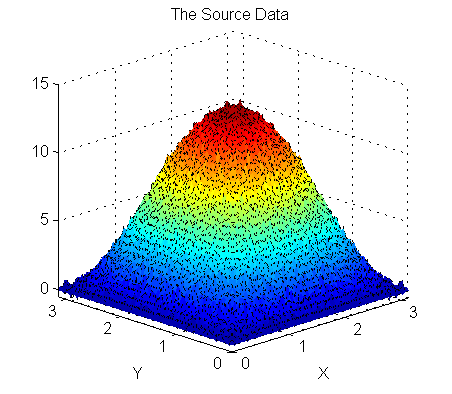}\\%[-0.1em]
  \multicolumn{2}{c}{\footnotesize }\\[-0.2em]    
  \footnotesize Without Noises & \footnotesize Within Noises\\
\end{tabular}
\caption{The Surface of Data Set for Heat Source.}
\label{fg:data_f}
\end{center}
\end{figure}
In case of $\sigma^2 = 10^{-1}$, the error of the estimation is quite large, while, the error in case of $\sigma^2 = 10^{-2}$ is smaller. In addition, we see that the errors (in two cases of the variance $\sigma_{ij}^2 = \sigma^2$) are decreased when $n,m$ are increased (see Figures \ref{fg:er1}). The results of this experiment have demonstrated numerically the effectively of the estimator.
%#################################################################
\begin{table}[htbp]
	\caption{Comparing errors between methods in Example 1: $\sigma^2 = 10^{-1}, 10^{-2}$ and $n=m=21$.}
	\begin{center}
		\begin{tabular}{l c c c c c c }
			\toprule 
			Run & \multicolumn{2}{c}{Estimator} & \multicolumn{2}{c}{QBV method} & \multicolumn{2}{c}{Classical solution} \\
			& $\sigma^2=10^{-1}$ & $\sigma^2=10^{-2}$ 
			& $\epsilon=10^{-1}$ & $\epsilon=10^{-2}$ & $\epsilon=10^{-1}$ & $\epsilon=10^{-2}$ \\
			\midrule
				1 & 0.3488 & 0.0855 & 1.8493 & 0.6836 & 9.0696E+0466 & 7.2832E+0467 \\
				2 & 0.2810 & 0.0098 & 1.7936 & 0.6492 & 5.6003E+0468 & 1.3249E+0467 \\
				3 & 0.1665 & 0.1151 & 1.6715 & 0.6741 & 4.9925E+0468 & 7.6606E+0467 \\
				4 & 0.0642 & 0.0555 & 1.8313 & 0.6199 & 1.9484E+0468 & 8.8691E+0466 \\
				5 & 0.3478 & 0.0795 & 1.7854 & 0.5895 & 3.0650E+0468 & 9.9884E+0467 \\
				6 & 0.1541 & 0.1344 & 1.7437 & 0.6661 & 1.6817E+0468 & 1.0375E+0466 \\
				7 & 1.1359 & 0.1045 & 1.9001 & 0.6162 & 1.0333E+0468 & 5.0317E+0467 \\
				8 & 0.1819 & 0.1116 & 1.8155 & 0.6789 & 4.4777E+0468 & 2.6705E+0467 \\
				9 & 0.5098 & 0.0794 & 1.9957 & 0.6704 & 8.7766E+0467 & 1.9412E+0467 \\
				10 & 0.0767 & 0.0819 & 1.7344 & 0.6770 & 1.9678E+0468 & 7.3191E+0466 \\
				11 & 0.6926 & 0.0509 & 1.8346 & 0.6305 & 2.8522E+0468 & 3.6677E+0467 \\
				12 & 0.1562 & 0.0650 & 1.8199 & 0.6876 & 9.8178E+0468 & 6.0419E+0467 \\
				13 & 0.3010 & 0.0133 & 1.6247 & 0.6591 & 1.3412E+0468 & 4.7005E+0467 \\
				14 & 0.2691 & 0.0549 & 1.9827 & 0.6664 & 4.9153E+0468 & 2.6146E+0466 \\
				15 & 0.8242 & 0.0784 & 1.8294 & 0.6782 & 2.8401E+0468 & 2.2989E+0467 \\
				16 & 0.0800 & 0.0897 & 2.0291 & 0.6365 & 3.9761E+0468 & 3.2519E+0467 \\
				17 & 0.5340& 0.0694 & 1.8317 & 0.6593 & 5.5066E+0466 & 4.5486E+0467 \\
				18 & 0.3112 & 0.0560 & 1.7623 & 0.6140 & 5.6634E+0468 & 4.9512E+0467 \\
				19 & 0.0823 & 0.1052 & 1.8327 & 0.6706 & 4.7594E+0467 & 1.5004E+0467 \\
				20 & 0.8982 & 0.0593 & 1.7463 & 0.6531 & 4.2411E+0468 & 3.3806E+0467 \\
				21 & 1.1967 & 0.0919 & 1.8337 & 0.6322 & 6.7184E+0468 & 3.4589E+0467 \\
				22 & 0.6456 & 0.1117 & 1.6554 & 0.6898 & 3.1764E+0468 & 8.5158E+0467 \\
				23 & 0.7978 & 0.0921 & 1.8755 & 0.6289 & 1.9857E+0468 & 1.3291E+0467 \\
				24 & 0.7382 & 0.0732 & 1.8330 & 0.6568 & 1.4733E+0468 & 1.6599E+0467 \\
				25 & 0.2039 & 0.1161 & 1.7400 & 0.6372 & 2.2766E+0468 & 2.3429E+0467 \\
				26 & 0.1441 & 0.1000 & 1.8158 & 0.6410 & 9.6333E+0467 & 3.4518E+0467 \\
				27 & 1.3111 & 0.1097 & 1.7632 & 0.6621 & 2.3796E+0468 & 3.9224E+0467 \\
				28 & 0.3626 & 0.1020 & 1.8254 & 0.6583 & 4.7331E+0468 & 7.5024E+0466 \\
				29 & 0.2833 & 0.0173 & 1.7640 & 0.6552 & 8.1452E+0467 & 1.4300E+0467 \\
				30 & 0.8313 & 0.0414 & 1.9595 & 0.6774 & 4.0568E+0468 & 4.4984E+0467	\\
			\midrule   
			Average &	0.4643	&	0.0785	&	1.8160	&	0.6540	&	divergence	&	divergence	\\
			\bottomrule
		\end{tabular}
		\label{table:er1}
	\end{center}
\end{table}
%#################################################################
Table \ref{table:er1} shows the error of the method. We see that the error between the exact solution with the classical solution grows very fast. In fact, the error data is quite small $\epsilon=10^{-1},10^{-2}$ but the error solution is large $\approx10^{466}$. This illustrates numerically the ill--posedness of  our problem. The other hand, the errors in Table \ref{table:er1} of the truncated method is better than the one of the QBV method.
\begin{figure}[htbp]
	\begin{center}
		\begin{tabular}{@{}c@{}c@{}}
			\multicolumn{2}{c}{\footnotesize }\\[-0.2em]
			\includegraphics[width=8cm, height=6cm]{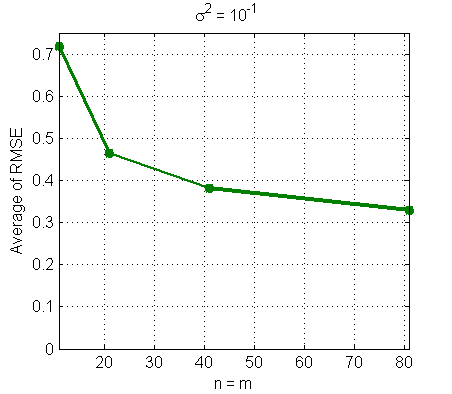}&
			\includegraphics[width=8cm, height=6cm]{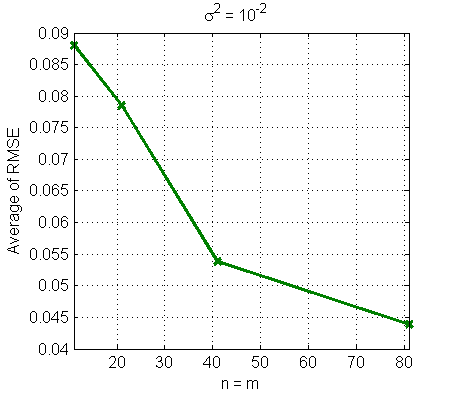}\\[-0.1em]
		\end{tabular}
		\caption{The Graphics of The Average of RMSE in two cases $\sigma^2 = 10^{-1}$ and \small $\sigma^2 = 10^{-2}$.}
		\label{fg:er1}
	\end{center}
\end{figure}
\textit{Example 2.}
Let $a(t)=0.5e^{-t}$ and $e^{-1}=a_1\le a(t)\le a_2=1$. Then, we calculate $A_{GL}=0.3161$. Suppose that
\begin{eqnarray*}
	f(x,y,t) &=& \frac{e^{-t}}{\pi}\left[\left(2e^{-t}+(4e^{-t}-1)\sin 2y\right)+(1-10e^{-t})\sin 3x\sin y\right],\\
	h(x,y) &=& 	\frac{e^{-1}}{\pi}\left[x(\pi-x)\sin y-\sin 3x\sin y\right].
\end{eqnarray*} 
We easily see that the exact value of $u(x,y,0)$ is
\[\theta(x,y) = \frac{1}{\pi}\left[x(\pi-x)\sin y-\sin 3x\sin y\right]\]
which has an infinite Fourier expansion.

The results of Example 2 error as in Table \ref{table:er2}. From the results, we can obtain the same conclusions as in Example 1.
%Next, we investigate the numerical convergence rate with many cases of $n,m$. 
%We recall, in our example, that the exact value of $u(x,y,0)$ is
%\[\theta(x,y) = 5\sin(x)\sin(y).\]
%Choose $n=m=11,21,41$ and $81$. With $a(t) = 2 - t$ ($a_1 = 1$ and $a_2 = 2$), we have $A(1) = \dfrac{3}{2}$, so we can obtain the values of $N,M$ from $n,m$ and $\omega_1 = \omega_2 = 1$ by the following formula
%\[N = \left\lfloor\dfrac{(\log n)^{1/2}}{\sqrt{6}}\right\rfloor \text{ and } M = \left\lfloor\dfrac{(\log m)^{1/2}}{\sqrt{6}}\right\rfloor.
%\]
%In each case of variance $\sigma_{ij}^2 = \sigma^2$, we compute $30$ times. To calculate the error between the exact solution and the estimator, we use the root mean squared error (RMSE) as follows
%\[\mathrm{RMSE}(\hat{\theta};\theta) = \sqrt{\frac{1}{nm}\sum_{i=1}^{n}\sum_{j=1}^{m}\left(\hat{\theta}(x_i,y_j) - \theta(x_i,y_j)\right)^2}.\]
%Then, we find the average of $\mathrm{RMSE}(\hat{\theta};\theta)$ in 30 runs order. We can see the numerical results in the graphics of values of RMSE in Figure \ref{fg:er1}.
%
%
%
%%##################################################################
\begin{table}[htbp]
	\caption{Comparing errors between methods in Example 2: $\sigma^2 = 10^{-1}, 10^{-2}$ and $n=m=21$.}
	\begin{center}
		\begin{tabular}{l c c c c c c }
			\toprule 
			Run & \multicolumn{2}{c}{Estimator} & \multicolumn{2}{c}{QBV method} & \multicolumn{2}{c}{Classical solution} \\
			& $\sigma^2=10^{-1}$ & $\sigma^2=10^{-2}$ 
			& $\epsilon=10^{-1}$ & $\epsilon=10^{-2}$ & $\epsilon=10^{-1}$ & $\epsilon=10^{-2}$ \\
			\midrule
1  & 0.2702                & 0.1533                & 0.2518                & 0.1431                & 4.79E+097             & 4.55E+095             \\
2  & 0.2111                & 0.1536                & 0.3038                & 0.1512                & 4.97E+096             & 1.22E+096             \\
3  & 0.1865                & 0.1540                & 0.2881                & 0.1402                & 9.17E+094             & 2.98E+096             \\
4  & 0.3827                & 0.1539                & 0.3039                & 0.1350                & 3.80E+097             & 2.34E+096             \\
5  & 0.2872                & 0.1525                & 0.3161                & 0.1521                & 1.18E+096             & 2.27E+096             \\
6  & 0.2492                & 0.1564                & 0.3135                & 0.1525                & 7.43E+096             & 1.23E+096             \\
7  & 0.2468                & 0.1539                & 0.2858                & 0.1289                & 1.28E+097             & 1.30E+096             \\
8  & 0.6985                & 0.1531                & 0.2876                & 0.1436                & 2.11E+097             & 3.79E+095             \\
9  & 0.2923                & 0.1534                & 0.3187                & 0.1421                & 2.04E+097             & 2.85E+095             \\
10 & 0.3177                & 0.1549                & 0.3104                & 0.1484                & 4.02E+097             & 1.14E+096             \\
11 & 0.1931                & 0.1534                & 0.2909                & 0.1386                & 1.21E+097             & 6.71E+095             \\
12 & 0.1957                & 0.1563                & 0.3355                & 0.1821                & 1.60E+097             & 6.91E+095             \\
13 & 0.1964                & 0.1532                & 0.3139                & 0.1313                & 7.14E+096             & 1.68E+096             \\
14 & 0.2875                & 0.1553                & 0.3018                & 0.1570                & 3.96E+096             & 1.24E+096             \\
15 & 0.2700                & 0.1540                & 0.3195                & 0.1403                & 4.51E+097             & 2.42E+096             \\
16 & 0.2558                & 0.1545                & 0.2985                & 0.1362                & 1.68E+097             & 5.06E+096             \\
17 & 0.1976                & 0.1535                & 0.3589                & 0.1466                & 1.60E+096             & 2.91E+096             \\
18 & 0.4981                & 0.1548                & 0.3853                & 0.1594                & 3.85E+096             & 2.65E+096             \\
19 & 0.2723                & 0.1539                & 0.3200                & 0.1540                & 9.70E+096             & 2.71E+096             \\
20 & 0.3152                & 0.1534                & 0.3312                & 0.1466                & 2.04E+097             & 1.77E+096             \\
21 & 0.3284                & 0.1544                & 0.3303                & 0.1442                & 1.60E+097             & 1.76E+096             \\
22 & 0.4009                & 0.1526                & 0.3173                & 0.1274                & 1.01E+097             & 3.65E+096             \\
23 & 0.3175                & 0.1532                & 0.3005                & 0.1475                & 9.57E+096             & 1.65E+096             \\
24 & 0.4426                & 0.1526                & 0.3132                & 0.1537                & 3.06E+096             & 8.29E+095             \\
25 & 0.3158                & 0.1528                & 0.3234                & 0.1353                & 4.21E+097             & 2.98E+096             \\
26 & 0.2715                & 0.1545                & 0.3443                & 0.1428                & 1.22E+097             & 2.06E+096             \\
27 & 0.1848                & 0.1527                & 0.3517                & 0.1312                & 2.46E+097             & 2.31E+096             \\
28 & 0.2695                & 0.1555                & 0.3104                & 0.1470                & 1.05E+097             & 1.81E+096             \\
29 & 0.5497                & 0.1637                & 0.3126                & 0.1300                & 7.88E+096             & 4.31E+096             \\
30 & 0.3161                & 0.1530                & 0.3047                & 0.1414                & 4.69E+096             & 8.49E+095             \\			\midrule   
			Average &	0.3074	&	0.1542 	&	0.3148	&	0.1443 	&	divergence	&	divergence	\\
			\bottomrule
		\end{tabular}
		\label{table:er2}
	\end{center}
\end{table}
\section{Conclusion}
In this paper, we consider a nonhomogeneous backward problem with initial data and source having
random noises. We have to estimate the initial data and the source by regression methods in statistics. On the other hand, our problem is ill-posed. Hence, a regularization is in order.
We have used  the trigonometric method in nonparametric regression  associated with the truncated expansion method to approximate stably the Fourier coefficients of the unknown function $\theta(x,y)$. The estimate of  bias of the discretization is given explicitly. Finally, we illustrate the theoretical part by comparing computation results of  nonparametric regression, QBV and classical solution methods.   

\section*{Acknowledgments}
We would like to express our sincere thanks to the anonymous referees for constructive comments that 
improved a lot of  idea in our paper.

\end{document}